\documentclass[a4paper]{amsart}
\usepackage[utf8]{inputenc}
\usepackage[english]{babel}
\usepackage{amsmath}
\usepackage{amsfonts}
\usepackage{amssymb}
\usepackage{amsthm}
\usepackage{array, caption, floatrow, tabularx, makecell, booktabs}
\usepackage{wasysym}
\usepackage{amscdx}
\usepackage{graphicx}
\usepackage{chngcntr}
\usepackage{tikz}
\usepackage{xcolor}
\usepackage{marvosym}
\usepackage{enumitem}
\usepackage{stix}
\usepackage{todonotes}
\usepackage{comment}
\usepackage{theoremref}

\usepackage{hyperref}
\hypersetup{colorlinks=true}

\newenvironment{enumerate-(a)}{\begin{enumerate}[label={\upshape (\alph*)}, leftmargin=2pc]}{\end{enumerate}}
\newenvironment{enumerate-(a)-r}{\begin{enumerate}[label={\upshape (\alph*)}, leftmargin=2pc,resume]}{\end{enumerate}}
\newenvironment{enumerate-(a)-5}{\begin{enumerate}[label={\upshape (\alph*)}, leftmargin=2pc,start=5]}{\end{enumerate}}
\newenvironment{enumerate-(A)}{\begin{enumerate}[label={\upshape (\Alph*)}, leftmargin=2pc]}{\end{enumerate}}
\newenvironment{enumerate-(A)-r}{\begin{enumerate}[label={\upshape (\Alph*)}, leftmargin=2pc,resume]}{\end{enumerate}}
\newenvironment{enumerate-(i)}{\begin{enumerate}[label={\upshape (\roman*)}, leftmargin=2pc]}{\end{enumerate}}
\newenvironment{enumerate-(i)-r}{\begin{enumerate}[label={\upshape (\roman*)}, leftmargin=2pc,resume]}{\end{enumerate}}
\newenvironment{enumerate-(I)}{\begin{enumerate}[label={\upshape (\Roman*)}, leftmargin=2pc]}{\end{enumerate}}
\newenvironment{enumerate-(I)-r}{\begin{enumerate}[label={\upshape (\Roman*)}, leftmargin=2pc,resume]}{\end{enumerate}}
\newenvironment{enumerate-(1)}{\begin{enumerate}[label={\upshape (\arabic*)}, leftmargin=2pc]}{\end{enumerate}}
\newenvironment{enumerate-(1)-r}{\begin{enumerate}[label={\upshape (\arabic*)}, leftmargin=2pc,resume]}{\end{enumerate}}

\newenvironment{enumerate-(star)}{\begin{enumerate}[label={\upshape{(\( \star_{ \arabic*} \))}}, leftmargin=2pc]}{\end{enumerate}}

\makeatletter
\renewcommand\@mkboth[2]{\markboth{#1}{}}
\makeatother

\newtheorem{thm}{Theorem}[section]
\newtheorem{conjecture}{Conjecture}
\newtheorem{lem}[thm]{Lemma}
\newtheorem{prop}[thm]{Proposition}
\newtheorem{claim}{Claim}[thm]
\newtheorem{cor}[thm]{Corollary}

\theoremstyle{definition}
\newtheorem{defn}[thm]{Definition}

\theoremstyle{remark}

\newtheorem{remark}[thm]{Remark}

\newcommand{\upsmallfrown}{\mathbin{\raisebox{0.3ex}{$\smallfrown$}}}

\newcommand{\N}{\mathbb{N}}

\title{Non-Dyadic Laver Algebras}
\date{}
\keywords{Left distributive algebra, Laver table, Laver algebra, inverse limit, direct limit}

\author[J.P.~Aguilera]{Juan P. Aguilera}
\address{Institute of Discrete Mathematics and Geometry, TU Wien, 1040 Vienna --- Austria}
\email{aguilera@logic.at}

\author[M.~Iannella]{Martina Iannella}
\address{Institute of Discrete Mathematics and Geometry, TU Wien, 1040 Vienna --- Austria}
\email{martina.iannella@tuwien.ac.at}

\subjclass[2020]{20N02, 08A99}
\thanks{This work was partially supported by the Austrian Science Foundation (FWF) grants 10.55776/ESP3, 10.55776/ESP1625325, 10.55776/STA139, 10.55776/PAT2264325, and 10.55776/EFP2661725.}

\begin{document}
	
\maketitle
	
\begin{abstract}
We study finite \textit{Laver algebras}, the generalizations of Laver tables on sets of cardinality not of the form $2^n$. 
We characterize the conditions under which projections and embeddings between Laver algebras exist and show that there are $2^{\aleph_0}$ pairwise non-isomorphic direct limits of finite Laver algebras. %We also investigate the periods of these algebras through explicit computations, showing that ...}
\end{abstract}
	\setcounter{tocdepth}{1}
\tableofcontents

\section{Introduction}
Let $N = 2^n$. The \(n\)-th \textit{Laver table} $S_{2^n}$ is the finite algebra defined on the set \(\{1, 2,  \dots, N\}\) equipped with a binary operation \(\star\) satisfying the following properties:
\begin{equation}\label{eq:cyclic0}
	a \star 1 = a + 1 \quad \text{ for } a=1,\dots, N-1,\qquad \qquad N \star 1 = 1,
\end{equation}
and for all \(a\leq N, b < N\),
\begin{equation}\label{eq:distributivity0weak}
	a \star (b \star 1) = (a \star b) \star (a \star 1).
\end{equation}

It is not hard to establish the existence of a unique operation $\star$ on $\{1,\hdots, N\}$ satisfying \eqref{eq:cyclic0} and \eqref{eq:distributivity0weak} (see Dehornoy \cite[Lemma 1.2]{De00}). Such an operation will in fact satisfy the full left self-distributivity law:
for all \(a, b, c \leq N\),
\begin{equation}\label{eq:distributivity0}
	a \star (b \star c) = (a \star b) \star (a \star c).
\end{equation}

Laver tables were discovered by Laver \cite{La92}. They are algebraic objects which found applications in various areas of mathematics, including algebra (see Drapal \cite{Dra94,Dra95}), braid theory (see Dehornoy \cite{De94,De95,De00}), probability theory (see Moore \cite{Moo23}), cryptography (see Dehornoy \cite{De06}), and set theory (see Laver \cite{La95}, Laver and Miller \cite{LM11, LM13}, Miller \cite{Mi16}, Dougherty \cite{Do93,Do96}, Dougherty and Jech \cite{DJ97}). They are particularly interesting from the perspective of set theory because the operation of application on rank-to-rank elementary embeddings $j:V_\lambda\to V_\lambda$ is left self-distributive. The existence of such an embedding is not provable in $\mathsf{ZFC}$ and is expressed by the large cardinal axiom I3, one of the strongest axioms currently considered in set theory.

In this article, we consider the more general \textit{Laver algebras} obtained after relaxing the constraint of size $2^n$. These are finite algebras $S_N$ on sets \(\{1, \dots, N\}\) for arbitrary \(N \in \N\), equipped with an operation \(\star\) that satisfies properties \eqref{eq:cyclic0} and  \eqref{eq:distributivity0weak} (see Figure \ref{FigureAlgebra9}). Again, this operation is uniquely determined, although in this general case, the algebras may fail to be left self-distributive
(see \cite[Chapter X]{De00} for an introduction and basic properties of these algebras). At times, we may speak of ``finite Laver algebras'' merely for the purpose of clarity if appropriate even though all Laver algebras are finite as defined here.

\begin{figure}	\renewcommand{\arraystretch}{1.5} % row height factor
	\centering
	\caption{The Laver algebra $S_9$.}

		\begin{tikzpicture}[baseline=(table2.base)]
			\node (table2) {
				\begin{tabular}{c|*{9}{c}}
					\(\mathbf{\star}\)
					& \textbf{1} & \textbf{2} & \textbf{3} & \textbf{4}
					& \textbf{5} & \textbf{6} & \textbf{7} & \textbf{8}
					& \textbf{9}   \\ \hline
					\textbf{1}  & 2 & 4 & 6 & 8 & 9 & 2 & 4 & 6 & 8 \\
					\textbf{2}  & 3 & 4 & 8 & 9 & 3 & 4 & 8 & 9 & 3  \\
					\textbf{3}  & 4 & 9 & 4 & 9 & 4 & 9 & 4 & 9 & 4  \\
					\textbf{4}  & 5 & 6 & 8 & 9 & 5 & 6 & 8 & 9 & 5  \\
					\textbf{5}  & 6 & 9 & 6 & 9 & 6 & 9 & 6 & 9 & 6  \\ 
					\textbf{6}  & 7 & 8 & 9 & 7 & 8 & 9 & 7 & 8 & 9  \\
					\textbf{7} & 8 & 9 & 8 & 9 & 8 & 9 & 8 & 9 & 8  \\
					\textbf{8} & 9 & 9 & 9 & 9 & 9 & 9 & 9 & 9 & 9  \\
					\textbf{9} & 1 & 2 & 3 & 4 & 5 & 6 & 7 & 8 & 9 \\
				\end{tabular}
			};
		\end{tikzpicture}
		\label{FigureAlgebra9}
\end{figure}

We shall be interested in general questions concerning how these algebras differ from the usual Laver tables, as well as in structural connections between algebras $S_n$ and $S_m$, for $n\neq m$. When $n<m$ are powers of $2$, the situation is well understood: the projection 
\[a \mapsto a \mod n\] is a homomorphism from $S_m$ to $S_n$. This leads to an inverse limit $S_{inv}$ which is known to be isomorphic to the free left self-distributive algebra with one generator \cite{DJ97}. Similarly, the \textit{corner embedding} 
\begin{align*}
n-a &\mapsto m-a, &0\leq a < n
\end{align*}
is an embedding from $S_n$ into $S_m$, leading to a direct limit $S_{dir}$ of Laver algebras $S_{2^k}$. See Figure \ref{FigAlgebras48}.

\begin{figure}[H]
	\renewcommand{\arraystretch}{1.5} % row height factor
	\centering
	\caption{The Laver tables $S_4$ and $S_8$.}
\label{FigAlgebras48}
	\begin{minipage}{0.38\textwidth}
		\centering
		% ---- Small 4x4 table ----
		\begin{tikzpicture}[baseline=(table1.base)]
			\node (table1) {
				\begin{tabular}{c|llll}
					\(\mathbf{\star}\) & \textbf{1} & \textbf{2} & \textbf{3} & \textbf{4} \\ \hline
					\textbf{1} & 2 & 4 & 2 & 4 \\
					\textbf{2} & 3 & 4 & 3 & 4 \\
					\textbf{3} & 4 & 4 & 4 & 4 \\
					\textbf{4} & 1 & 2 & 3 & 4 \\
				\end{tabular}
			};
			
			% Red border around the entire small table
		\end{tikzpicture}
	\end{minipage}
	\hspace{-1.8em}
	\begin{minipage}{0.58\textwidth}
		\centering
		% ---- Large 12x12 table ----
		\begin{tikzpicture}[baseline=(table2.base)]
			\node (table2) {
				\begin{tabular}{c|*{12}{c}}
					\(\mathbf{\star}\)
					& \textbf{1} & \textbf{2} & \textbf{3} & \textbf{4}
					& \textbf{5} & \textbf{6} & \textbf{7} & \textbf{8} \\ \hline
					\textbf{1}  & 2 & 4 & 6 & 8 & 2 & 4 & 6 & 8  \\
					\textbf{2}  & 3 & 4 & 7 & 8 & 3 & 4 & 7 & 8  \\
					\textbf{3}  & 4 & 8 & 4 & 8 & 4 & 8 & 4 & 8 \\
					\textbf{4}  & 5 & 6 & 7 & 8 & 5 & 5 & 7 & 8  \\ 
					\textbf{5}  & 6 & 8 & 6 & 8 & 6 & 8 & 6 & 8 \\
					\textbf{6} & 7 & 8 & 7 & 8 & 7 & 8 & 7 & 8  \\
					\textbf{7} & 8 & 8 & 8 & 8 & 8 & 8 & 8 & 8  \\
					\textbf{8} & 1 & 2 & 3 & 4 & 5 & 6 & 7 & 8 \\
				\end{tabular}
			};
		\end{tikzpicture}
	\end{minipage}
\end{figure}

The situation for general Laver algebras is very different. In general, there exists a homomorphism $\pi: S_m\to S_n$ for $n<m$ precisely when $n | m$ and $n$ is a power of $2$ (see \S\ref{sec:projections}; the fact that this condition is sufficient was mentioned in Dehornoy \cite{De00}), so the only inverse limit of finite Laver algebras is  the inverse limit of $\{S_{2^k}: k
\in\mathbb{N}\}$, precisely the structure studied by Laver (see \cite{La92}).

As far as corner embeddings, the situation is much more interesting. In  \S\ref{sec:embeddings} a criterion is given to determine whether such an embedding exists between $S_n$ and $S_m$. As an application of this, we obtain the main result of this article:
\begin{thm}
There exist $2^{\aleph_0}$ pairwise non-isomorphic countably infinite direct limits of finite Laver algebras under corner embeddability.
\end{thm}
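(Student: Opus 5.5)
The plan is to show that the relation ``$S_n$ corner-embeds into $S_m$'' is rich enough to contain continuum many essentially different infinite chains. We then distinguish the resulting direct limits by an isomorphism invariant that records which finite algebras occur as corner subalgebras.

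\textbf{Step 1: a supply of embeddings.} Invoke the criterion of \S\ref{sec:embeddings} for corner embeddability $S_n\hookrightarrow S_m$. I expect it to reduce to an arithmetic condition relating $n$ and $m$, for instance that $m\equiv n$ modulo a suitable power of $2$ depending on $n$, or that the period structure of the rows $m-a$ in $S_m$ matches that of the rows $n-a$ in $S_n$ for $a<n$. From this condition I would extract two facts.
\begin{enumerate-(i)}
\item Every $S_n$ corner-embeds into infinitely many $S_m$.
\item For suitable starting points there are at least two essentially different ways to extend. Concretely, I aim for a binary tree $(n_s)_{s\in 2^{<\omega}}$ of integers such that $S_{n_s}$ corner-embeds into $S_{n_t}$ whenever $s\subseteq t$, while $S_{n_{s^\frown 0}}$ does not corner-embed into any $S_{n_t}$ with $t\supseteq s^\frown 1$, and symmetrically.
\end{enumerate-(i)}
Odd $n$ (or $n$ with different odd parts) look like the natural source of the branching, since by \S\ref{sec:projections} non-dyadic sizes behave rigidly.

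\textbf{Step 2: direct limits.} For each branch $x\in 2^\omega$, let $D_x$ be the direct limit of the chain $S_{n_{x\restriction k}}$ along the corner embeddings. The chain is strictly increasing in size, so $D_x$ is countably infinite. Each $D_x$ comes with a distinguished element $\top$, the image of the maximal elements $N$. All embeddings fix this element, so it is the common right/left unit, and in $D_x$ it is definable as the unique $e$ with $e\star y=y$ for all $y$.

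\textbf{Step 3: the invariant and distinguishing branches.} For a direct limit $D$, consider the set $F(D)$ of all $n$ such that $S_n$ corner-embeds into $D$. Here a corner embedding into $D$ means an injective homomorphism sending $n$ to $\top$ and $n-a$ to $a$-fold $\star 1$-predecessors appropriately; it is enough to require that it is an embedding sending the top to $\top$. An isomorphism $D\cong D'$ preserves $\top$, hence preserves this notion, so $F(D)=F(D')$. Any finite subalgebra of $D_x$ generated by a corner lies inside some stage $S_{n_{x\restriction k}}$. Moreover, the image of a corner-embedded $S_n$ is determined by $\top$ and the element playing the role of $1$. Together these should give $F(D_x)=\{n: S_n\hookrightarrow S_{n_{x\restriction k}}\text{ for some }k\}$. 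The separation property in Step 1 then gives $F(D_x)\neq F(D_y)$ for $x\neq y$, yielding $2^{\aleph_0}$ pairwise non-isomorphic limits. The upper bound of $2^{\aleph_0}$ is trivial, since there are only countably many finite algebras and continuum many sequences of them.

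\textbf{Main obstacle.} I expect the hardest part to be the separation in Step 1(ii), together with the rigidity claim that a corner copy of $S_n$ inside $D_x$ must come from a stage. For the rigidity I would first use that an embedding sends $1$ to an element $c$ with $\top\star\cdots$ cycles of exact length $n$ under $y\mapsto y\star c$-type periods, which pins down $n$ from the orbit of $c$ in a stage. For the separation I would first try branching on the $2$-adic valuation (or the odd part) of $n_t-n_s$, chosen so that the criterion of \S\ref{sec:embeddings} fails across branches.
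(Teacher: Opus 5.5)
Your overall architecture (a binary tree of sizes with cross-branch non-embeddability, direct limits along branches, an isomorphism invariant) matches the paper's. However, the decisive Step 1(ii) is only announced, and the mechanism you suggest for it is not the one that works. The criterion of \S\ref{sec:embeddings} is that $S_N$ corner-embeds into $S_{N'}$ iff $N'\equiv N \pmod{\tau^N(a)}$ for all $a<N$. To block an embedding you must exhibit a row whose period fails to divide the relevant difference. Nothing in your proposal produces such a row, and in particular varying the $2$-adic valuation of $n_t-n_s$ does not. The missing idea is this. If $\iota\colon S_N\to S_{N'}$ is a corner embedding, the row $N'-N$ of $S_{N'}$, just below the copy of $S_N$, reads $N'-N+1,\dots,N'$: inductively, $(N'-N)\star(a+1)=\iota(a)\star\iota(1)=\iota(a+1)$. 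So $\tau^{N'}(N'-N)=N$. Hence $S_{N'}$ corner-embeds into $S_M$ only if $M\equiv N'\pmod N$. Since corner embeddings compose, every stage $M$ above a sibling $N''$ satisfies $M\equiv N''\pmod N$, which gives your strong separation once $N'\not\equiv N''\pmod N$. To obtain two such children that both admit $S_N$, take $N$ prime and $\geq 3$. Then $N$ is coprime to every $\tau^N(a)\le N-a<N$, so the Chinese remainder theorem lets you prescribe any nonzero residue of $N'$ modulo $N$. Dirichlet's theorem then lets you take $N'$ prime, so the construction iterates.

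Step 3 has a separate flaw: it is \emph{not} enough to require an embedding sending the top to $\top$. Every embedding does this automatically, since $\top$ is the only idempotent, yet non-corner embeddings exist. An example is the paper's $S_4\to S_{12}$ with $1\mapsto 7$, $2\mapsto 8$, $3\mapsto 11$, $4\mapsto 12$. So your $F(D_x)$ records arbitrary embeddings into stages, while your separation only concerns corner embeddings, and $F(D_x)\neq F(D_y)$ does not follow. (Also, $\top$ is only a left unit: $y\star\top>y$ for $y\neq\top$.) A correct finish runs as follows.
\begin{enumerate-(1)}
\item Identify $D_x$ with $\mathbb{Z}_{\le 0}$ via $a\in S_M\mapsto a-M$, so $\top=0$ and the corner embeddings become the identity.
\item For $y\neq\top$, every $y\star z$ exceeds $y$, and $y+1$ is such a product. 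So the order is the transitive closure of the relation ``$y\neq\top$ and $z$ lies in the row of $y$'', and any isomorphism preserves it.
\item Since $\mathbb{Z}_{\le 0}$ is rigid, an isomorphism $D_x\to D_y$ must be the identity.
\item If $x,y$ split at a node $N$ with children $N',N''$, the period-$N$ row gives $(-N)\star 0=-N+r'$ in $D_x$ and $(-N)\star 0=-N+r''$ in $D_y$. Here $r',r''\in\{1,\dots,N\}$ are the residues of $N',N''$ modulo $N$, and $r'\neq r''$, a contradiction.
\end{enumerate-(1)}
This also supplies the justification the paper compresses into its last sentence.
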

In other words, there are $2^{\aleph_0}$ ways of selecting sequences of Laver algebras each of which embeds into the corner of the next one in such a way that no two sequences yield isomorphic structures. The sequence of powers of $2$ is one example.

In \S\ref{sec:periods}, we conclude with some numerical data and comments regarding the  periods of entries on the first rows of Laver algebras.

%%%%%%%%%%%%%%%%%%%%%%%%%%%%%%%%%%%%%%%%%%%%%%%%%%%%%%%%%%%%%%%%%%%%%%%%%

\section{Preliminaries}\label{sec:preliminaries}
\subsection{Facts from number theory}
In working with sequences of Laver algebras, we shall have to make use of some facts concerning modular arithmetic and prime numbers.
To this aim, we recall some well-known theorems from number theory, starting with the Chinese remainder theorem:

\begin{thm}[Chinese reminder theorem]
Let \(n_1, \dots, n_k\) be integers greater than \(1\), and denote by \(n\) their product. If the \(n_i\) are pairwise coprime, and if \(a_1, \dots, a_k\) are any integers, then the system
\begin{align*}
	x & \equiv a_1  \mod n_1\\
& \vdots \\
	x & \equiv a_k  \mod n_k
\end{align*}
has a solution, and any two solutions \(x_1\) and \(x_2\) satisfy \(x_1 \equiv x_2 \mod n\).	
\end{thm}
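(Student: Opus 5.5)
The plan is the standard two-step argument: existence of a solution first, then uniqueness modulo $n$. We induct on $k$, reducing everything to the case of two moduli. For $k=1$ there is nothing to prove: $x=a_1$ is a solution, and any two solutions are congruent modulo $n_1=n$.

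The key case is $k=2$, with coprime moduli $n_1,n_2$.
\begin{enumerate-(i)}
\item Since $\gcd(n_1,n_2)=1$, Bézout's identity gives integers $u,v$ with $un_1+vn_2=1$. These come from the extended Euclidean algorithm, or from the fact that the ideal $n_1\mathbb{Z}+n_2\mathbb{Z}$ is principal and generated by the gcd.
\item Put $x=a_1vn_2+a_2un_1$.
\item Modulo $n_1$ we have $vn_2\equiv 1$, so $x\equiv a_1$. Symmetrically, modulo $n_2$ we have $un_1\equiv 1$, so $x\equiv a_2$.
\end{enumerate-(i)}

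Uniqueness holds in general, not just for $k=2$. If $x_1,x_2$ are both solutions, then every $n_i$ divides $x_1-x_2$. Since the $n_i$ are pairwise coprime, their product divides $x_1-x_2$. This last step is itself an induction, using Euclid's lemma: if $a\mid bc$ and $\gcd(a,b)=1$ then $a\mid c$.

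For the inductive step, assume the result for $k-1$ moduli.
\begin{enumerate-(i)}
\item Solve the first $k-1$ congruences, obtaining some $y$. By the uniqueness part, the set of all their solutions is exactly the class $y \bmod n'$, where $n'=n_1\cdots n_{k-1}$.
\item Note that $n'$ is coprime to $n_k$: a prime dividing both would divide some $n_i$ with $i<k$, and also $n_k$, contradicting pairwise coprimality.
\item Apply the two-modulus case to the system $x\equiv y \bmod n'$, $x\equiv a_k \bmod n_k$. Its solutions are exactly the solutions of the original system of $k$ congruences.
\item Uniqueness modulo $n'n_k=n$ follows from the general argument above.
\end{enumerate-(i)}

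The only real content is Bézout's identity and its consequence, Euclid's lemma, which together give divisibility by the product. I expect this to be the main point requiring care; everything else is bookkeeping. Since the paper presents this as a well-known fact, it is reasonable to cite Bézout rather than reprove it.
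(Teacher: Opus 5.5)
Your proof is correct. The Bézout construction $x=a_1vn_2+a_2un_1$ settles the case of two moduli. The Euclid's-lemma induction gives divisibility of $x_1-x_2$ by the product, which is the uniqueness modulo $n$. The reduction to the pair $x\equiv y \bmod n'$, $x\equiv a_k \bmod n_k$ is sound: the solution set of the first $k-1$ congruences is exactly $y+n'\mathbb{Z}$. One inclusion comes from uniqueness, and the other, which you leave implicit, holds because each $n_i$ divides $n'$.

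The paper gives no proof of this statement. It only recalls the Chinese remainder theorem as a well-known fact before using it in Section~\ref{sec:embeddings}, so the only comparison to make is that yours is the standard textbook argument.
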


The second is Dirichlet's theorem on prime numbers in arithmetic progressions:

\begin{thm}[Dirichlet] \label{ThmDirichlet}
	Let \(a\) and \(b\) be coprime. Then there are infinitely many prime numbers of the form \(a+n\ b\).
\end{thm}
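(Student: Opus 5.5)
The statement immediately above is Dirichlet's theorem on primes in arithmetic progressions. The paper quotes it as a classical result; it does not reprove it. Still, if I were to prove it here, I would follow Dirichlet's analytic method through characters and $L$-functions. Write the progression as $p\equiv a \pmod b$ with $\gcd(a,b)=1$. The aim is to show that
\[
\sum_{p\equiv a \ (\mathrm{mod}\ b)} p^{-s}\to\infty \quad\text{as } s\to 1^+ ,
\]
since a divergent sum over primes forces infinitely many primes in the class.

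The key steps, in order, are these.

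\begin{enumerate-(1)}
\item Introduce the Dirichlet characters $\chi$ of the group $(\mathbb{Z}/b\mathbb{Z})^\times$. Use the orthogonality relation
\[
\frac{1}{\varphi(b)}\sum_\chi \overline{\chi(a)}\,\chi(n)=\mathbf{1}_{n\equiv a \ (\mathrm{mod}\ b)}
\]
to isolate the residue class.
\item Define $L(s,\chi)=\sum_n \chi(n)n^{-s}$ and establish its Euler product for $s>1$. Taking logarithms gives
\[
\log L(s,\chi)=\sum_p \chi(p)p^{-s}+O(1),
\]
where the error term is uniformly bounded as $s\to1^+$.
\item Combine the first two steps:
\[
\sum_{p\equiv a} p^{-s}=\frac{1}{\varphi(b)}\sum_\chi \overline{\chi(a)}\log L(s,\chi)+O(1).
\]
\item Handle the principal character $\chi_0$. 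Since $L(s,\chi_0)$ differs from $\zeta(s)$ only by finitely many Euler factors, $\log L(s,\chi_0)\sim\log\frac{1}{s-1}\to\infty$.
\item Handle the non-principal characters $\chi$. Partial summation, using bounded partial sums of $\chi$, shows that $L(s,\chi)$ extends continuously to $s>0$. What remains is to show that $\log L(s,\chi)$ stays bounded as $s\to1$, i.e.\ that $L(1,\chi)\neq 0$.
\end{enumerate-(1)}

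The main obstacle is the nonvanishing $L(1,\chi)\ne0$. For complex characters I would use the product $\prod_\chi L(s,\chi)$. Its logarithm equals $\varphi(b)$ times a sum over $p^k\equiv1$ with nonnegative coefficients, so $\prod_\chi L(s,\chi)\ge1$ for real $s>1$. If a complex $\chi$ had $L(1,\chi)=0$, then its conjugate $\overline{\chi}$ would also vanish at $1$. Together these two zeros would overwhelm the simple pole of $L(s,\chi_0)$ and force the product to $0$, a contradiction.

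The real quadratic characters are the hard case. For these I would consider $F(s)=\zeta(s)L(s,\chi)$. Its Dirichlet coefficients $\sum_{d\mid n}\chi(d)$ are nonnegative, and they are at least $1$ when $n$ is a perfect square. If $L(1,\chi)=0$, then $F$ would extend holomorphically past $s=1$. Landau's lemma on Dirichlet series with nonnegative coefficients would then make the series converge at $s=1/2$. That contradicts the divergence of $\sum_m (m^2)^{-1/2}$ coming from the square terms.
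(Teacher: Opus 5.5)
The paper does not prove this statement. It quotes Dirichlet's theorem as a classical result and uses it only as a black box in the proof of \thref{lem:emb_primes}, so there is no argument in the paper to compare yours with. Your outline is the standard analytic proof, and it is correct. It uses orthogonality of characters, Euler products and the pole of $L(s,\chi_0)$. Nonvanishing of $L(1,\chi)$ comes from $\prod_\chi L(s,\chi)\ge 1$ for complex $\chi$, and from $\zeta(s)L(s,\chi)$ with Landau's lemma for real $\chi$.

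One point needs tightening. In step (5) you only claim that $L(s,\chi)$ extends \emph{continuously} to $s>0$, but both nonvanishing arguments need more than that.

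\begin{enumerate-(i)}
\item The claim that two zeros at $s=1$ overwhelm the simple pole requires $|L(s,\chi)|=O(s-1)$ as $s\to 1^+$. Mere continuity would allow, say, $|L(s,\chi)|\approx (s-1)^{1/3}$, and then the product need not tend to $0$.
\item Landau's lemma requires $F(s)=\zeta(s)L(s,\chi)$ to be holomorphic on a region containing the real segment $[1/2,1]$. This uses that $L(s,\chi)$ is holomorphic on $\operatorname{Re}s>0$ and that $\zeta(s)-\frac{1}{s-1}$ is holomorphic there.
\end{enumerate-(i)}

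Both facts follow from the partial summation you already invoke, since it gives locally uniform convergence of $\sum_n\chi(n)n^{-s}$ on $\operatorname{Re}s>0$.

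Finally, to pass from infinitely many primes $p\equiv a \pmod b$ to primes literally of the form $a+nb$ with $n\ge 0$, discard the finitely many such $p$ below $a$.
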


\subsection{Laver algebras}
We collect some known properties of Laver algebras that will be needed in the sequel.

\begin{prop}\cite[Proposition 1.6]{De00}\thlabel{thm:power2_LD}
	\begin{enumerate-(i)}
		\item\label{thm:power2_LD-0} For every \(N\), the following relations hold in \(S_N\):
		\[
		a \star b = 
		\begin{cases}
			b & \text{ if } a=N,\\
			a+1 & \text{ if } b=1, \text{ or } a \star (b-1)=N,\\
			> a \star (b-1) & \text{ otherwise.}
		\end{cases}
		\]
		For \(a < N\), there exists \(p \leq N-a\) and \(c_1=a+1< \dots < c_p=N\) such that, for every \(b\), we have \(a \star b=c_i\) with \(i=b\mod p\), hence, in particular, \(a \star b > a\).
		
		\item If $N$ is not a power of\ \( 2 \), there exists no left distributive algebra satisfying the following properties:
		\begin{equation}\label{eq:cyclic}
			a \star 1 = a + 1, \text{ for } a=1,\dots, N-1, \qquad N \star 1 = 1;
		\end{equation}
		
		\item\label{thm:power2_LD-3} For every $N$, there is a unique left distributive algebra with domain \(\{1,\dots,2^N\}\) that satisfies \eqref{eq:cyclic}.
	\end{enumerate-(i)}
\end{prop}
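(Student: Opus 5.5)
The three parts are of quite different nature, so I would treat them separately: \ref{thm:power2_LD-0} by a direct downward induction on rows, (ii) by extracting arithmetic constraints on the row periods, and \ref{thm:power2_LD-3} by a lifting argument from $S_{2^{n}}$ to $S_{2^{n+1}}$.

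For \ref{thm:power2_LD-0}, I would first compute row $N$. Putting $a=N$ in \eqref{eq:distributivity0weak} gives $N\star(b+1)=(N\star b)\star 1$. Together with $N\star 1=1$ and induction on $b$, this yields $N\star b=b$. For $a<N$ I would argue by downward induction on $a$, assuming rows $a+1,\dots,N$ are known and satisfy $c\star x>c$ for $c<N$. The instance $a\star(b+1)=(a\star b)\star(a+1)$ of \eqref{eq:distributivity0weak} then splits into two cases. If $a\star b=N$, the right side is $N\star(a+1)=a+1$. Otherwise $c:=a\star b$ satisfies $a<c<N$, so the right side is $c\star(a+1)$, which exceeds $c$ by the induction hypothesis. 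Hence row $a$ starts at $a+1$ and increases strictly until it hits $N$, after which it restarts at $a+1$. It is therefore periodic, and since its values lie in $\{a+1,\dots,N\}$ its period satisfies $p\le N-a$. This gives exactly the displayed recursion and the description of $c_1<\dots<c_p$, and it also shows that the operation is uniquely determined by \eqref{eq:cyclic0} and \eqref{eq:distributivity0weak}.

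For (ii), assume $S_N$ is left distributive and let $p_a$ be the period of row $a$, so that $a\star b=N$ iff $p_a\mid b$. First I would show that $a\star N=N$ for every $a$. Applying left distributivity to $a\star(N\star N)$ gives $a\star N=(a\star N)\star(a\star N)$, so $a\star N$ is idempotent, and by \ref{thm:power2_LD-0} the only idempotent is $N$, because $c\star c>c$ for $c<N$. Hence $p_a\mid N$ for every $a$. Next I would show that left translation $x\mapsto a\star x$ is a homomorphism whose image $\{a+1,\dots,N\}$ is a subalgebra, and compare periods along it: rows $a\star x$ and $a\star(x\star y)$ should have periods related by a factor $1$ or $2$. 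The aim is to conclude by downward induction that every $p_a$ is a power of $2$, and finally that $p_1=N$. This step is the main obstacle. What I would try first is to use the relation $1\star b=N$ iff $p_1\mid b$ together with $p_{N-1}=1$ to force the doubling pattern $p_{N-2^k}=2^k$. If that fails, I would fall back on Dehornoy's argument via the ideals $\{b: a\star b=N\}$.

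For \ref{thm:power2_LD-3}, uniqueness follows from \ref{thm:power2_LD-0}, since the cyclic conditions determine $\star$. For existence I would follow Laver's route and induct on $n$. Given that $S_{2^n}$ is left distributive, I would define $S_{2^{n+1}}$ by the recursion of \ref{thm:power2_LD-0}. I would then prove full left distributivity by a triple induction, downward on $a$ and then upward on $b$ and $c$, using the projection $x\mapsto x \bmod 2^n$ onto $S_{2^n}$ to control the cases in which a value reaches $2^{n+1}$.
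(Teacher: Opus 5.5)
Part (i) is correct. So is your opening move in (ii): $a\star N=a\star(N\star N)$ is idempotent, hence equal to $N$, so every period divides $N$. The paper gives no proof of this proposition (it is quoted from Dehornoy), so your argument is judged on its own.

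The genuine gap is the rest of (ii). You do not supply an argument, and the route you sketch rests on false claims:
\begin{enumerate-(a)}
\item The image of $x\mapsto a\star x$ is the set of values of row $a$, not $\{a+1,\dots,N\}$; row $1$ of $S_4$ takes only the values $2,4$.
\item Periods are not related by a factor $1$ or $2$ along left translations. In $S_8$, $2\star 2=4$ has period $4$, while $2\star(2\star 1)=2\star 3=7$ has period $1$.
\item The goal $p_1=N$ is impossible, since your own part (i) gives $p_1\le N-1$; in $S_8$, $p_1=4$.
\end{enumerate-(a)}
The doubling pattern $p_{N-2^k}=2^k$ is the right target. What is missing is a way to control the periods of the rows strictly between $N-2^{k+1}$ and $N$. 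Row $1$ and $p_{N-1}=1$ tell you nothing about these rows.

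One way to close the gap is strong induction on $N$.
\begin{enumerate-(1)}
\item Since $S_N$ is left distributive, $L_a\colon x\mapsto a\star x$ is an endomorphism. By (i) its kernel is the congruence $b\equiv b'\pmod{p_a}$. Because $p_a\mid N$, the quotient is a left distributive system on $p_a$ classes satisfying \eqref{eq:cyclic}, i.e.\ a copy of $S_{p_a}$. As $p_a\le N-a<N$, the induction hypothesis makes every $p_a$ a power of $2$.
\item Now suppose $2^k\mid N$ and $2^{k+1}<N$. Each row $N-2^{k+1}+b$ with $0<b<2^{k+1}$ has a power-of-two period below $2^{k+1}$. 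That period divides $2^k$, hence divides $N-2^{k+1}$. So $(N-2^{k+1}+b)\star(N-2^{k+1}+1)=N-2^{k+1}+b+1$, and \eqref{eq:distributivity0weak} gives $(N-2^{k+1})\star b=N-2^{k+1}+b$ for $b\le 2^{k+1}$. Thus $p_{N-2^{k+1}}=2^{k+1}$, and this divides $N$.
\item For the largest $k$ with $2^k<N$ this gives $2^k\mid N\le 2^{k+1}$, so $N=2^{k+1}$.
\end{enumerate-(1)}
Alternatively, prove (iii) first and corner-embed $S_{2^{k+1}}$ via \thref{thm:emb}.

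Your plan for (iii) is likewise only a sketch, but the projection is the right tool. It is there that a factor-$1$-or-$2$ comparison genuinely holds. Once $S_{2^n}$ is left distributive, \thref{thm:hom_LD} makes $x\mapsto x\bmod 2^n$ a homomorphism $S_{2^{n+1}}\to S_{2^n}$. So in row $a$, the entries divisible by $2^n$ occur exactly at the multiples of $p'=\tau^{2^n}(a\bmod 2^n)$, and they lie in $\{2^n,2^{n+1}\}$. Strict increase within a period forces $p_a\in\{p',2p'\}$, so $a\star 2^{n+1}=2^{n+1}$. The backward direction of \thref{thm:distributivity_power2} then yields left distributivity. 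The triple induction is needed only once, inside that lemma.
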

%
%
%Given \(S_N\),
%\begin{itemize}
%	\item the last row is equal to 
%	\[
%	1 \quad 2 \quad \dots \quad N;
%	\]
%	\item the \((N-1)\)-th row is 
%	\[
%	N \quad N \quad \dots; 
%	\] 
%	\item the \((N-2)\)-th row is 
%	\[
%	N-1 \quad N \quad N-1 \quad N \quad \dots.
%	\] 
%\end{itemize} 
%When \(N\) is even, 
%\begin{itemize}
%	\item the \((N-3)\)-th row is
%	\[
%	N-2 \quad N \quad N-2 \quad N \quad \dots;
%	\] 
%	\item the \((N-4)\)-th row is
%	\[
%	N-3 \quad N-2 \quad N-1 \quad N \quad N-3 \quad \dots;
%	\]
%	\item the \((N-5)\)-th row is
%	\subitem if \(N\) divides \(4\), where \(4\) is the period of the \((N-4)\)-th row, 
%	\[
%	N-4 \quad N \quad \dots,
%	\]
%	\subitem otherwise 
%	\[
%	N-4 \quad N-2 \quad N \quad \dots ;
%	\]
%\end{itemize}

\begin{prop}\thlabel{thm:distributivity_power2}\cite[Lemma 1.7]{De00}
	The algebra \(S_N\) is left distributive if and only if the equality \(a \star N=N\) is true for every \(a \leq N\). 
\end{prop}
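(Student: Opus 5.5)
We must prove: $S_N$ is left distributive iff $a\star N=N$ for every $a\le N$.

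\emph{Necessity.} Assume $S_N$ is left distributive and fix $a<N$ (for $a=N$, $N\star N=N$ by \thref{thm:power2_LD}\ref{thm:power2_LD-0}). Since $N\star b=b$ for all $b$, applying \eqref{eq:distributivity0} with $b=N$ gives
\[
a\star c=a\star(N\star c)=(a\star N)\star(a\star c)\qquad\text{for all } c.
\]
Take $c=1$, so $a\star c=a+1$. Then $a+1=(a\star N)\star(a+1)$. By \thref{thm:power2_LD}\ref{thm:power2_LD-0}, if $x:=a\star N<N$ then $x\star y>x$ for every $y$. Also, as $y$ runs over $1,\dots,N$, the values $x\star y$ run through the whole cycle $c_1=x+1<\dots<c_p=N$, because $p\le N-x<N$. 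Hence the values $a\star c$ exhaust the cycle of row $a$, and every element $z$ of that cycle satisfies $x\star z=z$. In particular $x\star N=N$ (with $N=c_p$ of row $a$) and $x\star(a+1)=a+1$. I expect to contradict $x<N$ by a descending argument: $x\star(a+1)=a+1$ forces $a+1>x$, while $x=a\star N>a$, so $x=a+1$ would be needed, and $(a+1)\star(a+1)=a+1$ contradicts $(a+1)\star y>a+1$ unless $a+1=N$. This last step, ruling out $x<N$ using the monotone cycle structure, is the point to check carefully; the plan is precisely this inequality squeeze.

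\emph{Sufficiency.} Assume $a\star N=N$ for all $a$. Then each row is periodic with a period $p_a$ dividing $N$, so $a\star(b+N)$ is consistent modulo $N$. I would prove $a\star(b\star c)=(a\star b)\star(a\star c)$ by a triple induction: descending on $a$ (the case $a=N$ is trivial), then on $b$ descending, then on $c$ ascending. The case $c=1$ is exactly \eqref{eq:distributivity0weak}, except $b=N$, which is covered by the hypothesis: $a\star(N\star 1)=a\star 1=a+1$ and $(a\star N)\star(a\star1)=N\star(a+1)=a+1$. For $c\mapsto c+1$, write $b\star(c+1)=(b\star c)\star(b\star 1)$ by the weak law, i.e.\ $b\star(c+1)=(b\star c)\star(b+1)$. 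Apply induction, using $a\star b>a$ and $a\star(b\star c)>a$ to reach larger first arguments, following Dehornoy's standard argument for $2^n$. The hypothesis $a\star N=N$ is what makes the step through the wraparound $b\star c=N$ (where $b\star(c+1)=b+1$) consistent: both sides reduce to $a\star(b+1)$ versus $(a\star b)\star N\star\cdots$, and it matches since $(a\star b)\star N=N$.

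The main obstacle is organizing this induction so that every invoked instance has a strictly larger first argument, or the same argument with smaller data. I would mirror Dehornoy's proof of uniqueness and distributivity for $S_{2^n}$, where the only property of $2^n$ actually used is $a\star 2^n=2^n$.
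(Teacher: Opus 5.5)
The paper does not prove this proposition (it is quoted from Dehornoy), so I compare your proposal with the standard argument.

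Your necessity direction is correct. With $x=a\star N$, left distributivity and $N\star 1=1$ give $x\star(a+1)=a+1$, whereas $x<N$ would force $x\star(a+1)>x\geq a+1$. This is exactly the inequality chain the paper uses to show that $a\star N=N$ is equivalent to the weak law at $b=N$. Note that $a<x<a+1$ is already a contradiction, so your detour through ``$x=a+1$ would be needed'' and the remarks about the whole cycle of row $x$ are unnecessary.

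The gap is in sufficiency. Your induction order (descending on $a$, descending on $b$, ascending on $c$) is right, and so is your treatment of $c=1$. But the inductive step, which is the whole content of this direction, is only deferred to ``Dehornoy's standard argument.'' Your sketch of it also points the wrong way: no instance with first argument $a\star(b\star c)$ is needed, and the wraparound does not hinge on $(a\star b)\star N=N$. Write $\mathrm{LD}(a,b,c)$ for $a\star(b\star c)=(a\star b)\star(a\star c)$. Then the step for $a,b,c<N$ is
\begin{align*}
a\star(b\star(c+1)) &= a\star\bigl((b\star c)\star(b+1)\bigr) && \text{(weak law in row $b$)}\\
&= \bigl(a\star(b\star c)\bigr)\star\bigl(a\star(b+1)\bigr) && \text{(induction on $b$, as $b\star c>b$)}\\
&= \bigl((a\star b)\star(a\star c)\bigr)\star\bigl((a\star b)\star(a+1)\bigr) && \text{(induction on $c$; weak law)}\\
&= (a\star b)\star\bigl((a\star c)\star(a+1)\bigr) && \text{(induction on $a$, as $a\star b>a$)}\\
&= (a\star b)\star\bigl(a\star(c+1)\bigr) && \text{(weak law in row $a$)}.
\end{align*}
The only instances not supplied by the weak law \eqref{eq:distributivity0weak} or by an induction hypothesis are those with middle argument $N$, i.e.\ $a\star(N\star c)=(a\star N)\star(a\star c)$. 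These hold for every $c$, not just $c=1$, precisely because $N$ is a left identity and $a\star N=N$. That is the one place the hypothesis is used. The wraparound case $b\star c=N$ is simply the instance $\mathrm{LD}(a,N,b+1)$ in the second line and needs no separate treatment. With this chain written out, your proof is complete.
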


Applying the previous theorem and Dehornoy \cite[Proposition 1.15]{De00}, one can easily see that Laver tables are left distributive. Another interesting observation, due to Biane \cite{Bi18}, is that the algebra \(S_N\) is a Laver table if and only if property \eqref{eq:distributivity0weak} holds for \(b=N\) as well. The following remark is a slight strengthening of this.

\begin{prop}\thlabel{thm:distributivity_n}
Let \(S_N\) be a Laver algebra and \(a < N\). Then, the equality \(a \star N = N\) holds if and only if property \eqref{eq:distributivity0weak}  is satisfied for \(a\) and \(b=N\). %(where, following the notation of \thref{eq:distributivity0weak}, \(g\) is equal to \(1\)).
\end{prop}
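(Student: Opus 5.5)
We need: for a<N, a⋆N = N iff a⋆(N⋆1) = (a⋆N)⋆(a⋆1). Since N⋆1 = 1, the left side a⋆(N⋆1) equals a⋆1 = a+1, so the property is the identity
\[
a+1 = (a\star N)\star(a+1).
\]
The plan is to prove both directions of this equivalence directly from \thref{thm:power2_LD}\ref{thm:power2_LD-0}.

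For ($\Rightarrow$), assume $a\star N=N$. Then $(a\star N)\star(a+1)=N\star(a+1)=a+1$, because the last row of $S_N$ is the identity, i.e.\ $N\star b=b$ by \thref{thm:power2_LD}\ref{thm:power2_LD-0}. This equals $a\star 1=a\star(N\star 1)$, as required.

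For ($\Leftarrow$), assume $a\star N\neq N$ and set $c=a\star N$. By \thref{thm:power2_LD}\ref{thm:power2_LD-0} we have $a<c<N$, and \thref{thm:power2_LD}\ref{thm:power2_LD-0} applied to the row $c<N$ gives $c\star b>c$ for every $b$. In particular
\[
(a\star N)\star(a+1)=c\star(a+1)>c>a,
\]
while we need the value $a+1$. It therefore suffices to rule out $c\star(a+1)=a+1$. Since $c\geq a+1$, we get $c\star(a+1)>c\geq a+1$, so $c\star(a+1)\neq a+1$ and the identity fails. This proves the contrapositive.

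The only step that needs any care is the use of the strict inequality $c\star b>c$ for rows $c<N$; this is exactly the periodic description of rows in \thref{thm:power2_LD}\ref{thm:power2_LD-0}. Note also that the relevant instance of \eqref{eq:distributivity0weak} with $b=N$ involves $b\star 1=N\star 1=1$ rather than $b+1$, which is why the left-hand side collapses to $a+1$. No real obstacle is expected.
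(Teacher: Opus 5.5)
Your proof is correct and follows essentially the same route as the paper. The forward direction uses that the last row is the identity; the converse uses that every entry in a row $c<N$ exceeds $c\geq a+1$, which the paper writes as the chain $a\star 1\le b<b\star 1\le b\star(a+1)$ with $b=a\star N$.
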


\begin{proof}
	Fix \(a<N\) and suppose that \(a \star N = N\). Then 
	\[
	a \star (N \star 1) = a \star 1 = N \star (a \star 1) = (a \star N) \star (a \star 1).
	\]
	For the converse, suppose that there exists \(a<N\) such that \(a \star N \neq N\), and let \(b=a \star N < N\). Then
	\[
	a \star (N \star 1)= a \star 1 \leq b < b+1= b \star 1 \leq b \star (a +1) = (a \star N) \star (a \star 1), 
	\]
	and hence \(a \star (N \star 1) \neq (a \star N) \star (a \star 1)\).
\end{proof}

Figures \ref{FigureAlgebra9} and \ref{FigAlgebras48} illustrate this proposition: in general, the last column of $S_N$ will have value only $N$ if and only if $N$ is a power of two. Moreover, values different from $N$ are tied to failures of distributivity.

\section{Projections and inverse limits of Laver algebras}\label{sec:projections}

A \textbf{projection} of a structure \((A, \star)\) onto a structure \((A', \star')\) is a surjective homomorphism 
\[ \pi: (A, \star)\to (A', \star'),\] 
i.e. a surjective map \(\pi\colon A \to A'\) such that \(\pi(a \star b)=\pi(a) \star' \pi(b)\) for all \(a,b \in A\). From now on, we abuse notation and use \(\star\) for \(\star'\) as well. 

\begin{thm}\cite[Lemma 1.8]{De00}\thlabel{thm:hom_LD}
	If \(S_N\) is a left distributive algebra and \(N\) divides \(N'\), then the map 
\[a \mapsto (a\mod N)\] 
is a projection of \(S_{N'}\) onto \(S_N\).
\end{thm}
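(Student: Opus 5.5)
We need to prove: if $S_N$ is left distributive and $N \mid N'$, then $\pi(a) = a \bmod N$ (taking representatives in $\{1,\dots,N\}$, so multiples of $N$ go to $N$) is a surjective homomorphism $S_{N'} \to S_N$. Surjectivity is clear. For the homomorphism property, the plan is to define a candidate operation on $\{1,\dots,N'\}$ by pulling back $S_N$,
\[
a \ast b := \text{the unique } c \in \{1,\dots,N'\} \text{ with } c \equiv \pi(a)\star\pi(b) \pmod N \text{ and suitable size},
\]
and then invoke uniqueness of the operation satisfying \eqref{eq:cyclic0} and \eqref{eq:distributivity0weak}. Since the lift of $\pi(a)\star\pi(b)$ is ambiguous, I will not lift naively. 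Instead I will prove $\pi(a\star b)=\pi(a)\star\pi(b)$ directly, by a double induction that mirrors the standard construction of $S_{N'}$: descending induction on $a$ from $N'$ down to $1$, and for fixed $a$ ascending induction on $b$.

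\textbf{Base cases.} For $a=N'$ we have $N'\star b=b$, and $\pi(N')=N$ with $N\star\pi(b)=\pi(b)$, so the claim holds. For $b=1$ and $a<N'$ we have $a\star 1=a+1$. Also $\pi(a)\star 1=\pi(a)+1$ when $\pi(a)<N$, and $N\star 1 = 1$ otherwise. Either way this agrees with $\pi(a+1)$, because $N\mid N'$ makes reduction mod $N$ compatible with the cyclic successor.

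\textbf{Inductive step.} Take $a<N'$ and $b\ge 1$. Then
\[
a\star(b+1) = a\star(b\star 1) = (a\star b)\star(a+1).
\]
By \thref{thm:power2_LD}\ref{thm:power2_LD-0} we have $a\star b>a$, so the descending induction on $a$ applies to the outer product and gives
\[
\pi\big((a\star b)\star(a+1)\big)=\pi(a\star b)\star\pi(a+1).
\]
The inner induction on $b$ gives $\pi(a\star b)=\pi(a)\star\pi(b)$, and the base case gives $\pi(a+1)=\pi(a)\star 1$. Combining these,
\[
\pi(a\star(b+1))=(\pi(a)\star\pi(b))\star(\pi(a)\star 1).
\]
It remains to show that this equals $\pi(a)\star\pi(b+1)$. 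There are two cases.
\begin{itemize}
\item If $\pi(b)<N$, then $\pi(b+1)=\pi(b)\star 1$, and the needed identity is exactly \eqref{eq:distributivity0weak} in $S_N$.
\item If $\pi(b)=N$, then $\pi(b+1)=1$, and we need $(\pi(a)\star N)\star(\pi(a)\star1)=\pi(a)\star 1$. This uses the hypothesis: since $S_N$ is left distributive, \thref{thm:distributivity_power2} gives $\pi(a)\star N=N$, so the left side is $N\star(\pi(a)\star 1)=\pi(a)\star 1$. Equivalently, by \thref{thm:distributivity_n}, \eqref{eq:distributivity0weak} holds with $b=N$.
\end{itemize}

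\textbf{Range of $b$.} The inductive step needs $b<N'$, since $b+1$ must stay inside $S_{N'}$. That suffices, because $b$ only ranges over $\{1,\dots,N'\}$, and every value is reached from $b=1$ by the recurrence. The case $\pi(a)=N$ (that is, $N\mid a$ with $a<N'$) needs no special treatment: the right-hand identity still follows from \eqref{eq:distributivity0weak} or the last-column fact in $S_N$.

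\textbf{Main obstacle.} The key point is the wrap-around case $\pi(b)=N$. It is exactly where left distributivity of $S_N$ (through $a\star N=N$) is essential, and it is where the argument breaks for non-dyadic $N$. Everything else is bookkeeping to keep the nested induction well-founded, and that is guaranteed by $a\star b>a$ from \thref{thm:power2_LD}.
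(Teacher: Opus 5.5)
Your proof is correct. The double induction (descending on $a$, ascending on $b$) is well founded because $a\star b>a$ for $a<N'$, and the divisibility $N\mid N'$ is used exactly where it is needed: for $\pi(N')=N$ and for compatibility of $\pi$ with the cyclic successor. The inductive step correctly reduces $\pi(a\star(b+1))=\pi(a)\star\pi(b+1)$ to \eqref{eq:distributivity0weak} in $S_N$ when $\pi(b)<N$, and to the wrap-around identity when $\pi(b)=N$.

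The paper does not prove this statement itself; it imports it from Dehornoy's Lemma~1.8. So there is no competing route in the paper to compare against, and your argument is a valid self-contained proof.

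Two small remarks follow. First, in the wrap-around case you do not need \thref{thm:distributivity_power2}. The required identity $\pi(a)\star 1=(\pi(a)\star N)\star(\pi(a)\star 1)$ is literally the instance $\pi(a)\star(N\star 1)=(\pi(a)\star N)\star(\pi(a)\star 1)$ of left distributivity, since $N\star 1=1$.

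Second, your argument shows that the only consequence of left distributivity of $S_N$ that is used is \eqref{eq:distributivity0weak} with $b=N$. This is precisely the condition isolated in \thref{thm:distributivity_n}, and it is the same condition the paper exploits in the converse direction of \thref{thm:proj}.
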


%To the best of our knowledge, there is no explicit statement or proof in the literature of the converse of Theorem~\ref{thm:hom_LD}. 
We therefore prove the converse of Theorem~\ref{thm:hom_LD} here, extending \cite[Proposition~1.6]{De00}, which is restricted to Laver tables, to the full class of Laver algebras.

\begin{thm}\thlabel{thm:proj}
	There is a projection of \(S_{N'}\) onto \(S_N\) if and only if \(N\) divides \(N'\) and \(N\) is a power of \(2\).
\end{thm}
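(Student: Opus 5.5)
The sufficiency direction will follow directly from earlier results. If $N=2^n$ divides $N'$, then $S_N$ is left distributive by \thref{thm:power2_LD}\ref{thm:power2_LD-3}, which together with uniqueness of the Laver algebra on $\{1,\dots,N\}$ identifies it with the left distributive algebra there. \thref{thm:hom_LD} then gives the projection $a\mapsto a \bmod N$. The substance is the necessity direction.

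Here I have to read the statement for $N<N'$. For $N=N'$ the identity map is always a projection, so nothing forces $N$ to be a power of $2$ in that case, and the intended statement must concern $N<N'$. So let $\pi\colon S_{N'}\to S_N$ be a surjective homomorphism with $N<N'$, and put $c=\pi(1)$ and $x_a=\pi(a)$. The first step is to pin down $\pi$ using only the cyclic relations \eqref{eq:cyclic0}. Since $a+1=a\star 1$ for $a<N'$ and $N'\star 1=1$, we get
\[
x_1=c,\qquad x_{a+1}=x_a\star c \quad (a<N'),\qquad x_{N'}\star c=c .
\]
By \thref{thm:power2_LD}\ref{thm:power2_LD-0}, in $S_N$ we have $x\star c>x$ for $x<N$, and $N\star c=c$. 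Hence the sequence $(x_a)$ starts at $c$, strictly increases until it reaches $N$, then returns to $c$, and so on. In particular no element smaller than $c$ is ever hit, so surjectivity forces $c=1$. Then $x_a\star 1=x_a+1$ makes the sequence run $1,2,\dots,N,1,2,\dots$, that is, $\pi(a)=a \bmod N$ (with representatives in $\{1,\dots,N\}$). The closing relation $x_{N'}\star 1=1$ holds only if $x_{N'}=N$, which gives $N\mid N'$.

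The second step is to show that $N$ is a power of $2$, and this is the main point. The idea is to transport the weak distributivity \eqref{eq:distributivity0weak} from $S_{N'}$ to $S_N$ at exactly the instance $b=N$, where it can fail in $S_N$. Since $N<N'$, the element $b=N$ is admissible in \eqref{eq:distributivity0weak} in $S_{N'}$. So for every $a<N$ we have $a\star(N\star 1)=(a\star N)\star(a\star 1)$ in $S_{N'}$. Applying $\pi$, and using $\pi(a)=a$ and $\pi(N)=N$, yields the same identity in $S_N$. By \thref{thm:distributivity_n} this gives $a\star N=N$ for every $a<N$, and trivially also for $a=N$. \thref{thm:distributivity_power2} then shows that $S_N$ is left distributive. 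Finally, $S_N$ satisfies \eqref{eq:cyclic}, so part (ii) of \thref{thm:power2_LD} forces $N$ to be a power of $2$.

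The only delicate points are the identification $\pi(a)=a \bmod N$ via the orbit argument, and the observation that $b=N$ is a legal instance of \eqref{eq:distributivity0weak} in $S_{N'}$ but not in $S_N$. That second observation is exactly why the hypothesis $N<N'$ is needed.
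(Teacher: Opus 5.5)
Your proof is correct and follows essentially the same route as the paper. You pin down $\pi(a)=a \bmod N$ using the orbit of $\pi(1)$ and the relation $\pi(N')\star 1=1$, where the paper instead uses that $1$ occurs only in the first column and that $N$ is the unique idempotent of $S_N$. You then transport the instance $b=N$ of \eqref{eq:distributivity0weak} from $S_{N'}$ to $S_N$ and conclude via \thref{thm:distributivity_n}, \thref{thm:distributivity_power2} and \thref{thm:power2_LD}, exactly as the paper does.

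Your remark that necessity requires $N<N'$ is also right. The paper uses this implicitly when it applies \eqref{eq:distributivity0weak} with $b=N$ in $S_{N'}$, and for $N=N'$ not a power of $2$ the identity map is a counterexample to the statement as written.
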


\begin{proof}
	Suppose that \(N\) divides \(N'\) and \(N\) is a power of \(2\). Then by \thref{thm:power2_LD}\ref{thm:power2_LD-3} the algebra \(S_N\) is left distributive, and by applying \thref{thm:hom_LD} we obtain the existence of the desired projection.
	
	For the converse, let \(\pi\) be a projection of \(S_{N'}\) onto \(S_N\). We first claim that \(\pi(1)=1\). Since $\pi$ is surjective, there is $a \in \{1, \dots, N\}$ such that $\pi(a)=1$. If $a=1$ we are done. Suppose that $a>1$, and denote by $b^c = b^{c-1} \star b$ for every $b \in \{1,\dots, N\}$ and $1<c \leq N$. Then
	\[
	1=\pi(a)=\pi(1^a)=\pi(1^{a-1} \star 1)=\pi(1)^{a-1}\star \pi(1),
	\]
	and from the fact that $1$ appears only as an element of the first column of $S_N$, it follows that $\pi(1)=1$. But then it is easy to see by induction that 
	\begin{equation}\label{eq:equality}
		\pi(a)\equiv a \mod N \qquad \text{for all } a \leq N'.
	\end{equation}
	Moreover, we have that \(\pi(N')=\pi(N' \star N')=\pi(N') \star \pi(N')\), and since $N$ is the only element in $S_N$ which is idempotent (this follows by \ref{thm:power2_LD-0} of \thref{thm:power2_LD}), we obtain that \(\pi(N')=N\). From the latter and \eqref{eq:equality} it follows that \(N' = N \cdot k\) for some \(k\). 

	It remains to show that \(N\) is a power of \(2\). Toward a contradiction, suppose that this is not the case. 
	Then by \thref{thm:distributivity_power2} applied to \(S_N\), there is \(a < N \) in \(S_N\) such that \(a \star N \neq N\), and hence by \thref{thm:distributivity_n} we have  
	\[
	a \star (N \star 1) \neq (a\star N) \star (N \star 1).
	\] 
	On the other hand, by property \eqref{eq:distributivity0weak} applied to \(S_{N'}\) it follows that 
	\[
	a \star (N \star 1)= (a \star N) \star (a \star 1),
	\] 
	and thus \(\pi(a \star (N \star 1))=\pi((a \star N) \star (a \star 1))\) holds in \(S_N\), but 
	\[
	\pi(a \star (N \star 1))=\pi(a) \star (\pi(N) \star \pi(1))= a \star (N \star 1)
	\] 
	and 
	\[
	\pi((a \star N) \star (a \star 1))= (\pi(a) \star \pi(N)) \star (\pi(a) \star \pi(1))=(a \star N) \star (a \star 1),
	\] 
	a contradiction. 
\end{proof}

The proof of Theorem \ref{thm:proj} shows:

\begin{cor}
Suppose $\pi: S_{N'} \to S_N$ is a projection. Then, $\pi$ is defined by 
\[\pi(a) = a \mod N.\]
\end{cor}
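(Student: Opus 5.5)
The corollary follows by re-reading the first half of the proof of \thref{thm:proj}; the second half, which uses that $N$ is a power of $2$, is not needed. Let $\pi\colon S_{N'}\to S_N$ be a projection. The plan has two steps: first show $\pi(1)=1$, then show by induction on $a$ that $\pi(a)\equiv a \mod N$. Since $\pi(a)\in\{1,\dots,N\}$ and the representative of a residue class in $\{1,\dots,N\}$ is unique (with $0$ represented by $N$), the congruence gives $\pi(a)=a \mod N$, which is the claim.

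\textbf{Step 1: $\pi(1)=1$.} By surjectivity pick $a\le N'$ with $\pi(a)=1$. If $a=1$ we are done. Otherwise $a=1^{a}=1^{a-1}\star 1$ in $S_{N'}$, using \eqref{eq:cyclic0}. Applying $\pi$ gives $1=\pi(1)^{a-1}\star\pi(1)$, where powers are now computed in $S_N$. By \thref{thm:power2_LD}\ref{thm:power2_LD-0}, the value $1$ occurs in $S_N$ only as $N\star 1$, since $x\star b>x\ge 1$ for $x<N$ and $N\star b=b$. Hence $\pi(1)=1$.

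\textbf{Step 2: induction.} The base case is Step 1. For $a<N'$ we have $a+1=a\star 1$ in $S_{N'}$, so
\[
\pi(a+1)=\pi(a)\star\pi(1)=\pi(a)\star 1 .
\]
By \eqref{eq:cyclic0} in $S_N$, this equals $\pi(a)+1$ if $\pi(a)<N$, and equals $1$ if $\pi(a)=N$. In both cases $\pi(a+1)\equiv\pi(a)+1\equiv a+1 \mod N$.

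The only step needing any care is Step 1, specifically the observation that $1$ appears only in the first column of the last row of $S_N$. This is immediate from the strict inequality $a\star b>a$ for $a<N$ in \thref{thm:power2_LD}\ref{thm:power2_LD-0}. Everything else is a direct translation of \eqref{eq:cyclic0} through the homomorphism.
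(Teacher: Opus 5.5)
Your proof is correct and is essentially the paper's argument: the paper derives this corollary from the first half of the proof of \thref{thm:proj}, which shows $\pi(1)=1$ because $1$ occurs in $S_N$ only as $N\star 1$, and then gets $\pi(a)\equiv a \mod N$ by the induction along $a+1=a\star 1$ that you write out explicitly. You are right that the power-of-$2$ part of that proof plays no role here, and you correctly take the preimage of $1$ in $\{1,\dots,N'\}$; the paper's text writes $\{1,\dots,N\}$, which is a harmless slip.
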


A consequence of \thref{thm:proj} is the following.

\begin{cor}
	Up to isomorphism, there is a unique countable Laver algebra that arises as an inverse limit of a sequence of finite Laver algebras \(S_N\).
\end{cor}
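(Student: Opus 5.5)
The idea is to reduce the corollary to \thref{thm:proj} and then identify the resulting inverse system with the classical one.

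First, fix an inverse system of finite Laver algebras $(S_{N_k})_{k\in\N}$ with projections $\pi_k\colon S_{N_{k+1}}\to S_{N_k}$. We may discard any eventually constant part, since an inverse limit of an eventually constant system is just a finite algebra. We may also pass to a cofinal subsequence, which does not change the inverse limit. So assume $N_k<N_{k+1}$ and $N_k\to\infty$.

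Second, apply \thref{thm:proj} to each $\pi_k$: every $N_k$ is a power of $2$ and $N_k\mid N_{k+1}$. By the corollary following \thref{thm:proj}, each $\pi_k$ is forced to be $a\mapsto a \bmod N_k$. Hence the system is a cofinal subsystem of the canonical system $\{S_{2^n}\}_n$ with the reduction-mod maps. The canonical maps compose correctly, since $(a \bmod 2^m)\bmod 2^n = a\bmod 2^n$ for $n\le m$. Standard inverse-limit generalities then give that the limit of the subsystem is isomorphic to the limit of the full system $\{S_{2^n}\}$.

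Third, we must handle the bookkeeping around the word ``countable'', which I expect to be the main obstacle: the full inverse limit of $\{S_{2^n}\}$ is uncountable as a set. The natural reading, consistent with the introduction, is that the object meant is $S_{inv}$, the subalgebra generated by the thread $(1,1,\dots)$, which is countable and isomorphic to the free LD algebra on one generator \cite{DJ97}. Uniqueness then follows because every system reduces to the canonical one with canonical maps, so the generating thread is determined. In the writeup I would state explicitly which of the two objects is meant and note that uniqueness holds for both.
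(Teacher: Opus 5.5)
Your proposal is correct and follows the paper's route. \thref{thm:proj}, together with the corollary after it, forces every bonding map to be reduction modulo a power of $2$, so any such system is a cofinal part of the canonical system $\{S_{2^n}\}$ and has the same limit. Your extra bookkeeping is more careful than the paper's one-line proof: passing to a strictly increasing subsequence is genuinely needed, since \thref{thm:proj} as stated fails when $N=N'$ (the identity of $S_3$ is a projection), and you are right that the full limit is uncountable, with its elements corresponding to $2$-adic integers, so uniqueness holds both for the full limit and for the subalgebra generated by $(1,1,\dots)$, without invoking that subalgebra's identification with the free LD algebra.
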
 
\begin{proof}
	By \thref{thm:proj}, any inverse system of Laver algebras must consist of Laver algebras \(S_N\) with \(N=2^k\), with projections compatible via reduction modulo \(2^k\). As a result, the inverse limit is uniquely determined (up to isomorphism) by this system.
\end{proof}

\section{Embeddings and direct limits of Laver algebras}\label{sec:embeddings}

In this section we prove the main result of this article, which concerns direct limits of finite Laver algebras.
Recall that an \textbf{embedding} between two algebras is an injective homomorphism. While in the previous section we have seen that there exist canonical projections of \(S_{N'}\) into \(S_N\) under the conditions of \thref{thm:proj}, we are going to prove that there is no canonical embedding of \(S_N\) into \(S_{N'}\). We first define a very natural embedding that can exist between two Laver algebras.

\begin{defn}
	We say that \(\iota \colon S_N \to S_{N'}\) is a \textbf{corner embedding} if \(\iota\) is an embedding and \(\iota(N-a)=N'-a\), equivalently \(\iota(a)=N'-N+a\), for every \(a<N\).
\end{defn}

Notice that since \(\iota\) is an embedding, we have \(\iota(N)=N'\).

\begin{figure}[H]
	\renewcommand{\arraystretch}{1.5} % row height factor
	\centering
	\caption{Example of the corner embedding \(\iota\) of $S_4$ into $S_{12}$, sending \(4\) to \(12\) and \(4-a\) to \(12-a\) for each \(a \in \{1,2,3\}\).}
	
	\begin{minipage}{0.38\textwidth}
		\centering
		% ---- Small 4x4 table ----
		\begin{tikzpicture}[baseline=(table1.base)]
			\node (table1) {
				\begin{tabular}{c|llll}
					\(\mathbf{\star}\) & \textbf{1} & \textbf{2} & \textbf{3} & \textbf{4} \\ \hline
					\textbf{1} & 2 & 4 & 2 & 4 \\
					\textbf{2} & 3 & 4 & 3 & 4 \\
					\textbf{3} & 4 & 4 & 4 & 4 \\
					\textbf{4} & 1 & 2 & 3 & 4 \\
				\end{tabular}
			};
			
			% Red border around the entire small table
			\draw[red, thick] 
			([xshift=-0.2cm, yshift=0.2cm] table1.south east) rectangle 
			([xshift=-2.2cm, yshift=2.6cm] table1.south east);
		\end{tikzpicture}
	\end{minipage}
	\hspace{-1.8em}
	\begin{minipage}{0.58\textwidth}
		\centering
		% ---- Large 12x12 table ----
		\begin{tikzpicture}[baseline=(table2.base)]
			\node (table2) {
				\begin{tabular}{c|*{12}{c}}
					\(\mathbf{\star}\)
					& \textbf{1} & \textbf{2} & \textbf{3} & \textbf{4}
					& \textbf{5} & \textbf{6} & \textbf{7} & \textbf{8}
					& \textbf{9} & \textbf{10} & \textbf{11} & \textbf{12} \\ \hline
					\textbf{1}  & 2 & 12 & 2 & 12 & 2 & 12 & 2 & 12 & 2 & 12 & 2 & 12 \\
					\textbf{2}  & 3 & 12 & 3 & 12 & 3 & 12 & 3 & 12 & 3 & 12 & 3 & 12 \\
					\textbf{3}  & 4 & 8 & 12 & 4 & 8 & 12 & 4 & 8 & 12 & 4 & 8 & 12 \\
					\textbf{4}  & 5 & 6 & 7 & 8 & 9 & 10 & 11 & 12 & 5 & 6 & 7 & 8 \\
					\textbf{5}  & 6 & 8 & 10 & 12 & 6 & 8 & 10 & 12 & 6 & 8 & 10 & 12 \\
					\textbf{6}  & 7 & 8 & 11 & 12 & 7 & 8 & 11 & 12 & 7 & 8 & 11 & 12 \\
					\textbf{7}  & 8 & 12 & 8 & 12 & 8 & 12 & 8 & 12 & 8 & 12 & 8 & 12 \\
					\textbf{8}  & 9 & 10 & 11 & 12 & 9 & 10 & 11 & 12 & 9 & 10 & 11 & 12 \\ 
					\textbf{9}  & 10 & 12 & 10 & 12 & 10 & 12 & 10 & 12 & 10 & 12 & 10 & 12 \\
					\textbf{10} & 11 & 12 & 11 & 12 & 11 & 12 & 11 & 12 & 11 & 12 & 11 & 12 \\
					\textbf{11} & 12 & 12 & 12 & 12 & 12 & 12 & 12 & 12 & 12 & 12 & 12 & 12 \\
					\textbf{12} & 1 & 2 & 3 & 4 & 5 & 6 & 7 & 8 & 9 & 10 & 11 & 12 \\
				\end{tabular}
			};
			
			% Red square around bottom-right 4x4 subtable
			\draw[red, thick] 
			([xshift=-2.8cm, yshift=0.2cm] table2.south east) rectangle 
			([xshift=-0.2cm, yshift=2.6cm] table2.south east);
		\end{tikzpicture}
	\end{minipage}
	
\end{figure}

Given \(S_N\) and \(a \in S_N\), we recall that the \textbf{period} of \(a\) (equivalently, the period of the \(a\)-th row) in \(S_N\), denoted by \(\tau^N(a)\), is the least integer \(p\leq N\) such that \(a \star p=N\). This is justified by the simple observation that Laver algebras satisfy
\[a \star b = a \star b' \text{ whenever $b \equiv b' \mod \tau^N(a)$},\]
i.e., that the
the entries in a row of a Laver algebras are periodic, with the period always beginning on $a\star 1$ and ending on $N$.

\begin{thm}\thlabel{thm:emb}
	The following are equivalent for \(2<N \leq N'\):
	\begin{enumerate-(i)}
		\item\label{thm:emb-1} There is a corner embedding from \(S_N\) to \(S_{N'}\),
		\item \(N \equiv N' \mod \tau^N(a)\), for all \(a<N\).
	\end{enumerate-(i)}
	In particular, if \(N \equiv N' \mod p_j^{e_j}\) for every power \(p_j^{e_j}\) of a prime \(p_j\) such that \(p_j^{e_j} < N\), then condition \ref{thm:emb-1} holds, and hence for each \(n \in \N\), there is a corner embedding of \(S_N\) into \(S_{N + n \prod \{p_j^{e_j} \mid p_j^{e_j} < N\}}\).
\end{thm}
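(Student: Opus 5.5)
Write $d=N'-N$, so the candidate corner embedding is $\iota(a)=a+d$ for $a<N$ and $\iota(N)=N'$. The approach is to describe each row of $S_{N'}$ with index $\ge d+1$ using \thref{thm:power2_LD}\ref{thm:power2_LD-0}, and to compare it with the corresponding row of $S_N$.

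\emph{Rows of $S_{N'}$ above $d$.} The key claim is that for $a<N$ the row $a+d$ of $S_{N'}$ is determined by the rows below it by the same recursion that determines row $a$ of $S_N$ from the rows below it, shifted by $d$. This is exactly the recursion \eqref{eq:cyclic0} and \eqref{eq:distributivity0weak}: $x\star 1=x+1$, and $x\star(b+1)=(x\star b)\star(x+1)$. I would prove the claim by descending induction on $a$, from $N-1$ down to $1$. Assume $\iota$ respects rows $a+1,\dots,N$. For $b<N$, every entry $a\star b$ exceeds $a$, and the recursion only ever consults rows with index larger than $a$, whose entries are known by hypothesis. The one delicate point is the last row. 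In $S_N$ the last row is used in the computation $N\star(a+1)=a+1$. In $S_{N'}$ the corresponding computation is $N'\star(a+1+d)=a+1+d$. These agree under $\iota$, so the row $a+d$ of $S_{N'}$ is the $\iota$-image of row $a$ of $S_N$ for as long as the values stay below $N$. The two rows diverge only when the value $N$ (respectively $N'$) is reached. In $S_N$ the row then restarts cyclically with period $\tau^N(a)$. In $S_{N'}$ the row also restarts at $a+1+d$, so the period $\tau^{N'}(a+d)$ equals $\tau^N(a)$.

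\emph{Equivalence with the condition on $N\equiv N'$.} It follows that for $b\le N$ the entries $(a+d)\star(b+d)$ in $S_{N'}$ and $\iota(a\star b)$ are periodic with the same period $p=\tau^N(a)$ and the same cycle. Hence $\iota$ is a homomorphism on row $a$ if and only if $b+d\equiv b\pmod p$ for all $b$, that is, $p\mid d$, which says $N\equiv N'\pmod{\tau^N(a)}$. The cases involving $N$ and $N'$ need separate checks. Row $N$ is the identity and is preserved. The column $N$ gives $(a+d)\star N'$ versus $\iota(a\star N)$, and these match exactly when $N'\equiv N\pmod p$. For (i)$\Rightarrow$(ii), evaluate at $b=N-d$ when $d<N$, or more simply compare the columns $b$ and $N$. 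The embedding property forces $(a+d)\star N'=\iota(a\star N)$, and by periodicity this gives $d\equiv 0\pmod p$. The hypothesis $N>2$ is there only to avoid degenerate indexing.

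\emph{The ``in particular'' part.} By \thref{thm:power2_LD}\ref{thm:power2_LD-0}, $\tau^N(a)\le N-a<N$. Every $q<N$ is a product of prime powers, each of which is less than $N$. So if $N\equiv N'$ modulo each such prime power, then by the Chinese remainder theorem $N\equiv N'$ modulo every $q<N$, and in particular modulo every $\tau^N(a)$. Taking $N'=N+n\prod\{p_j^{e_j}<N\}$ satisfies this.

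The main obstacle is the inductive claim that the rows of $S_{N'}$ above $d$ mirror those of $S_N$ up to the first hit of the top element. This requires care about which rows the recursion consults and about the boundary step at $N$ versus $N'$.
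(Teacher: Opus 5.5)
Your proposal is correct and follows essentially the paper's proof. Descending induction on $a$ shows that row $a+d$ of $S_{N'}$, read from column $1$, is the $\iota$-image of row $a$ of $S_N$, so $\tau^{N'}(a+d)=\tau^N(a)$; periodicity then turns $\tau^N(a)\mid N'-N$ into $\iota(a)\star\iota(b)=\iota(a\star b)$. The only difference is cosmetic: for (i)$\Rightarrow$(ii) you compare columns $N$ and $N'$ where the paper compares columns $1$ and $\iota(1)$, and your version has the side benefit of deriving the equality of periods that the paper merely observes.
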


\begin{proof}
	Assume the existence of a corner embedding and denote it by \( \iota \). Fix \( a < N \). Observe that \( \tau^N(a) = \tau^{N'}(\iota(a)) \). 
	Then the first element of the \( \iota(a) \)-th row of \( S_{N'} \) that lies in the range of \( \iota \) is given by
	\[
	%(N' - N + a) \star_{N'} (N' - N + 1) = 
	\iota(a) \star \iota(1) = \iota(a \star 1) = \iota(a + 1) = N' - N + a + 1.
	\]
	
	Since the first element of the \( \iota(a) \)-th row of \( S_{N'} \) is also \( N' - N + a + 1 \), it follows that the number of elements in the \( \iota(a) \)-th row which are not in the range of \( \iota \), and which equals \( N - N' \), must be a multiple of \( \tau^{N'}(\iota(a)) = \tau^N(a) \). 
	Therefore, we conclude that
	\[
	N \equiv N' \mod \tau^N(a).
	\]
	Conversely, suppose that \(N \equiv N' \mod \tau^N(a)\) for all \(a<N\), and let $\iota$ be the map defined by \(\iota(a)=N'-N+a\) for every \(a \leq N\). We want to show that \(\iota\) is an embedding of \(S_N\) into \(S_{N'}\), i.e. that for all \(a,b \in S_N\), 
	\begin{equation}\label{eq:thm_emb}
		%(N'-N+a) \star (N'-N+b) = 
		\iota(a) \star \iota(b) = \iota(a \star b) .
		%= N'-N + (a \star b).
	\end{equation}
We prove this by descending induction on \(a\). Fix \(a < N\) and assume that \eqref{eq:thm_emb} is true for all \(c\) such that \(a<c<N\). Let us look at the \(\iota(a)\)-th row of \(S_{N'}\). The first element is \(\iota(a)+1=N'-N+a+1\). The second one is
\begin{align*}
	\iota(a) \star 2 & = (\iota(a)\star 1)) \star (\iota(a)\star 1)= (N'-N+a+1) \star (N'-N+a+1)\\
	& = (N'-N+(a+1)) \star (N'-N+(a+1))=\iota(a+1) \star \iota(a+1),
\end{align*}
and by applying the inductive hypothesis to \(a+ 1\), from the latter we obtain
\[
= \iota((a+1)\star (a+1))=\iota(a \star 2).
\]
Then it is easy to see that for each \(2 \leq b \leq N\), we have 
\[
\iota(a) \star b = \iota(a \star b).
\]
In particular, \(\tau^{N'}(\iota(a))=\tau^N(a)\).
Now, from the hypothesis that \(N \equiv N' \mod \tau^N(a)\), it follows that
\[
\iota(a) \star b = \iota(a) \star (N'-N+b),
\]
hence 
\[
\iota(a) \star \iota(b) = \iota(a) \star (N'-N+b) = \iota(a) \star b = \iota(a \star b).
\] 

\end{proof}

\begin{remark}
	There are embeddings between Laver algebras which are not corner embeddings. An example is illustrated in Figure \ref{FigureNonCorner}.
\end{remark}
	
\begin{figure}[H]
	\centering
	\caption{Embedding of \(S_4\) into \(S_{12}\), sending \(1\) to \(7\), \(2\) to \(8\), \(3\) to \(11\), and \(4\) to \(12\).}
\label{FigureNonCorner}	{\renewcommand{\arraystretch}{1.5} % row height factor
		
		\begin{minipage}{0.38\textwidth}
			\centering
			% ---- Small 4x4 table ----
			\begin{tikzpicture}[baseline=(tableSmall.base)]
				\node (tableSmall) {
					\begin{tabular}{c|llll}
						\(\mathbf{\star}\) & \textbf{1} & \textbf{2} & \textbf{3} & \textbf{4} \\ \hline
						\textbf{1} & 2 & 4 & 2 & 4 \\
						\textbf{2} & 3 & 4 & 3 & 4 \\
						\textbf{3} & 4 & 4 & 4 & 4 \\
						\textbf{4} & 1 & 2 & 3 & 4 \\
					\end{tabular}
				};
				
				% Contorno rosso attorno alla 4x4
				\draw[red, thick] 
				([xshift=-0.2cm, yshift=0.2cm] tableSmall.south east) rectangle 
				([xshift=-2.2cm, yshift=2.6cm] tableSmall.south east);
			\end{tikzpicture}
		\end{minipage}
		\hspace{-1.8em}
		\begin{minipage}{0.58\textwidth}
			\centering
			% ---- Large 12x12 table ----
			\begin{tikzpicture}[baseline=(table.base)]
				\node (table) {
					\begin{tabular}{c|*{12}{c}}
						\(\mathbf{\star}\)
						& \textbf{1} & \textbf{2} & \textbf{3} & \textbf{4}
						& \textbf{5} & \textbf{6} & \textbf{7} & \textbf{8}
						& \textbf{9} & \textbf{10} & \textbf{11} & \textbf{12} \\ \hline
						\textbf{1}  & 2 & 12 & 2 & 12 & 2 & 12 & 2 & 12 & 2 & 12 & 2 & 12 \\
						\textbf{2}  & 3 & 12 & 3 & 12 & 3 & 12 & 3 & 12 & 3 & 12 & 3 & 12 \\
						\textbf{3}  & 4 & 8 & 12 & 4 & 8 & 12 & 4 & 8 & 12 & 4 & 8 & 12 \\
						\textbf{4}  & 5 & 6 & 7 & 8 & 9 & 10 & 11 & 12 & 5 & 6 & 7 & 8 \\
						\textbf{5}  & 6 & 8 & 10 & 12 & 6 & 8 & 10 & 12 & 6 & 8 & 10 & 12 \\
						\textbf{6}  & 7 & 8 & 11 & 12 & 7 & 8 & 11 & 12 & 7 & 8 & 11 & 12 \\
						\textbf{7}  & 8 & 12 & 8 & 12 & 8 & 12 & 8 & 12 & 8 & 12 & 8 & 12 \\
						\textbf{8}  & 9 & 10 & 11 & 12 & 9 & 10 & 11 & 12 & 9 & 10 & 11 & 12 \\ 
						\textbf{9}  & 10 & 12 & 10 & 12 & 10 & 12 & 10 & 12 & 10 & 12 & 10 & 12 \\
						\textbf{10} & 11 & 12 & 11 & 12 & 11 & 12 & 11 & 12 & 11 & 12 & 11 & 12 \\
						\textbf{11} & 12 & 12 & 12 & 12 & 12 & 12 & 12 & 12 & 12 & 12 & 12 & 12 \\
						\textbf{12} & 1 & 2 & 3 & 4 & 5 & 6 & 7 & 8 & 9 & 10 & 11 & 12 \\
					\end{tabular}
				};
				
				% ---- Red rectangles around 2x2 blocks ----
				\draw[red, thick] ([xshift=-4.2cm, yshift=0.2cm] table.south east) rectangle ([xshift=-3.0cm, yshift=1.4cm] table.south east);
				\draw[red, thick] ([xshift=-1.4cm, yshift=0.2cm] table.south east) rectangle ([xshift=-0.2cm, yshift=1.4cm] table.south east);
				\draw[red, thick] ([xshift=-3.0cm, yshift=2.7cm] table.south east) rectangle ([xshift=-4.2cm, yshift=3.9cm] table.south east);
				\draw[red, thick] ([xshift=-0.2cm, yshift=2.7cm] table.south east) rectangle ([xshift=-1.4cm, yshift=3.9cm] table.south east);
			\end{tikzpicture}
		\end{minipage}
	}
\end{figure}

	%In other words, there exist \(N'\) such that \(S_N\) is not an end-segment of \(\{1,\dots,N'\}\) and \(S_N\) is isomorphic to some \(S_{N'}\).

According to Theorem \ref{thm:emb}, every Laver algebra can be corner-embedded into a larger Laver algebra. Our next goal is to show that this can be done in many different ways, resulting in a family of $2^{\aleph_0}$ non-isomorphic direct limits.

\begin{defn}
	Let \(x: \mathbb{N}\to\mathbb{N}\) be an infinite sequence of natural numbers. We say that \(x\) is \textbf{Laver-suitable} if for each \(i\in\mathbb{N}\), we have
	\[x(k+1)
	\equiv x(k) \mod \tau^{x(k)}(j) \text{ for all $j<x(k)$}.\]
\end{defn}

If \(x\) is Laver-suitable, then it gives rise to an infinite algebra \(S_x\) obtained as the direct limit of \(\{S_{x(k)}: k\in\mathbb{N}\}\) under corner embeddability.
The algebra \(S_x\) where \(x(k) = 2^k\) has been studied before; we shall argue that there are many other algebras of this kind.  We need the following lemma:

\begin{lem}\thlabel{lem:emb_primes}
	For each prime number \(N\) and each \(k<N\), there is a prime number \(N'\) such that \(S_N\) corner embeds into \(S_{N'}\) and \(N' = k \mod N\).
\end{lem}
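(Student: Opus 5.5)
The plan is to reduce the statement to the congruence criterion of \thref{thm:emb}, and then obtain the prime $N'$ from the Chinese remainder theorem together with Dirichlet's theorem (Theorem~\ref{ThmDirichlet}).

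First, let $M$ be the least common multiple of the periods $\tau^N(a)$ for $a<N$. By \thref{thm:power2_LD}\ref{thm:power2_LD-0}, each period satisfies $\tau^N(a)\le N-a<N$. Hence every prime factor of $M$ is smaller than $N$. Since $N$ is prime, this gives $\gcd(M,N)=1$. By \thref{thm:emb}, whenever $N'\ge N>2$ and $N'\equiv N \mod M$, the algebra $S_N$ corner embeds into $S_{N'}$. So it suffices to find a prime $N'\ge N$ with
\[
N'\equiv N \mod M,\qquad N'\equiv k \mod N .
\]
As $M$ and $N$ are coprime, the Chinese remainder theorem yields an integer $x$ solving both congruences, unique modulo $MN$.

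Next, I will check that $x$ is coprime to $MN$, so that Dirichlet's theorem applies to the progression $x+n\,MN$.
\begin{enumerate-(i)}
\item Coprimality with $M$: we have $x\equiv N \mod M$ and $\gcd(N,M)=1$.
\item Coprimality with $N$: we have $x\equiv k\mod N$, so we need $k\not\equiv 0 \mod N$, i.e.\ $1\le k<N$.
\end{enumerate-(i)}
Under these conditions, Theorem~\ref{ThmDirichlet} gives infinitely many primes of the form $x+n\,MN$. Choosing one with $N'\ge N$, both congruences hold, and \thref{thm:emb} gives the corner embedding.

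Finally, the edge cases need separate treatment, and I expect them to be the only delicate point.
\begin{enumerate-(i)}
\item If $k=0$ is allowed, then any prime $N'\equiv 0\mod N$ must equal $N$. Take $N'=N$, with the identity as the (trivial) corner embedding.
\item If $N=2$, \thref{thm:emb} does not formally apply, since it assumes $2<N$. Instead I will check directly that $1\mapsto N'-1$, $2\mapsto N'$ is an embedding. By \thref{thm:power2_LD}\ref{thm:power2_LD-0}, the row of $N'-1$ in $S_{N'}$ is constantly $N'$, and the row of $N'$ is the identity. These match the rows of $1$ and $2$ in $S_2$. Then any prime $N'\ge 2$ with the prescribed residue mod $2$ works, e.g.\ $N'=3$ for $k=1$, or $N'=2$ for $k=0$.
\end{enumerate-(i)}
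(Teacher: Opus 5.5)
Your proposal is correct and follows essentially the same route as the paper. It reduces via \thref{thm:emb} to $N'\equiv N$ modulo the periods (your $M$ is exactly the paper's $\prod p_i^{e_i}$), adds $N'\equiv k$ modulo $N$ by the Chinese remainder theorem, and applies Dirichlet's theorem modulo $MN$; your coprimality check ($\gcd(x,M)=\gcd(N,M)=1$, and $N\nmid x$ for $1\le k<N$) is a cleaner, correct version of what the paper's Claim is meant to show, and your separate handling of $k=0$ and $N=2$ covers cases the paper's argument passes over.
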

\begin{proof}
		First, notice that for \(a < N\), \(\tau^N(a)\) does not divide \(N\) and indeed if \(N \equiv k \mod \tau^N(a)\), then either \(\tau^N(a) \not\equiv 0 \mod k\) or \(k=1\). To see this, it suffices to note that otherwise 
		\[N=\tau^N(a)n+k=k m n+k=k(mn+1)\] 
		for some integers \(n\) and \(m\), against the assumption that \(N\) is prime).
	
	Let now \(p_0^{e_0}, \dots , p_\ell^{e_\ell}\) be all the powers of prime numbers occurring in the factorizations of the periods \(\tau^N(a)\), for \(a<N\), with the largest exponent indicated for each prime. So \(p_0^{e_0}, \dots , p_\ell^{e_\ell}\) are pairwise coprime. By \thref{thm:emb}, for \(N<N'\), whether \(S_N\) embeds into \(S_{N'}\) depends only on \(\{N' \mod \tau^N(a) \mid a <N\}\), and thus only on \(\{N' \mod p_i^{e_i} \mid i \leq \ell\}\). So we have 
    %a system of congruences 
	%\begin{equation}
	%	N'\equiv k_i \mod p_i^{e_i}, \quad (i\leq \ell)
	%\end{equation}
	%such 
    that for \(N<N'\), \(S_N\) embeds into \(S_{N'}\) if and only if \(N'\) satisfies the congruences 
	\begin{equation}\label{eq:congruences}
		x \equiv N \mod p_i^{e_i}, \quad (i \leq \ell).
	\end{equation}
Writing
	\[P = N\cdot\prod_{0\leq i \leq \ell} p_i^{e_i},\] 
it follows from the Chinese reminder theorem that there is a unique natural number \(n < P\) %where \(P= p_0^{e_0} \cdot \dots \cdot p_\ell^{e_\ell}\cdot N\) denotes the product of \(p_0^{e_0}, \dots , p_\ell^{e_\ell},N\), 
such that 
	\[
	x \equiv n \mod P \iff x \text{ satisfies } \eqref{eq:congruences} \text{ and } x \equiv k \mod N.
	\]  
	
	\begin{claim}
 \(n\) does not divide \(P\).
	\end{claim}
	
	\begin{proof}[Proof of the claim.]
		Suppose it does. As $N$ is prime, $n$ does not divide $N$. 
	Then \(n = p_i^b\) for some \(i \leq \ell\) and \(b \leq e_i\). But since \(n\) satisfies \eqref{eq:congruences}, we have 
		\[
		N \equiv p_i^b \mod p_i^{e_i},
		\]
		i.e., \(N=p_i^{e_i} m + p_i^b\) for some natural number \(m\), contradicting the fact that \(p_i\) does not divide \(N\).
	\end{proof}
	
	%Notice also that \(n\) does not divide \(N\), as \(N\) is prime, so we conclude that \(n\) does not divide \(P\). 
	
	By Dirichlet's theorem \ref{ThmDirichlet}, there are infinitely many prime numbers \(N'\) of the form 
	\[
	n + m\ P.
	\]
	It is easy to see that \(N'\) satisfies all the congruences above, and hence by \thref{thm:emb}, \(S_N\) embeds into \(S_{N'}\).
\end{proof}

\begin{lem}\thlabel{lem2:emb_primes}
	Let \(N\) be a prime number. Then there are prime numbers \(N'\) and \(N''\), with \(N < N' < N''\), such that the following hold:
	\begin{enumerate-(i)}
		\item There is a corner embedding of \(S_N\) into \(S_{N'}\),
		\item There is a corner embedding of \(S_N\) into \(S_{N''}\),
		\item There is no corner embedding of \(S_{N'}\) into \(S_{N''}\).
	\end{enumerate-(i)} 
\end{lem}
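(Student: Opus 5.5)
The idea is to use the criterion of \thref{thm:emb} twice. Dirichlet's theorem, via the Chinese remainder argument of \thref{lem:emb_primes}, supplies primes satisfying the congruences that make $S_N$ corner embed. On top of these we add one congruence that breaks the embeddability of $S_{N'}$ into $S_{N''}$. The point is to find a period of $S_{N'}$ that is not already controlled by the periods of $S_N$.

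I will assume $N>2$, since \thref{thm:emb} is stated for $2<N$; the case $N=2$ would need a separate small check. Let $L$ be the least common multiple of the periods $\tau^N(a)$ for $a<N$. By \thref{thm:power2_LD}\ref{thm:power2_LD-0}, each such period satisfies $\tau^N(a)\leq N-a<N$. Since $N$ is prime, every prime factor of $L$ is smaller than $N$, so $\gcd(L,N)=1$.

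\emph{First step.} By \thref{lem:emb_primes}, or directly by Dirichlet applied to the progression $N+mL$ (note $\gcd(N,L)=1$), choose a prime $N'>N$ with $N'\equiv N \mod L$. By \thref{thm:emb}, the map $\iota(a)=N'-N+a$ is then a corner embedding of $S_N$ into $S_{N'}$.

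\emph{Key step: compute one period of $S_{N'}$.} I claim that $\tau^{N'}(N'-N)=N$. The row $N'-N$ of $S_{N'}$ starts with $(N'-N)\star 1=N'-N+1=\iota(1)$. Suppose its $(b-1)$-th entry is $\iota(x)$ with $x<N$. By \eqref{eq:distributivity0weak}, the $b$-th entry is
\[(N'-N)\star\bigl((b-1)\star 1\bigr)=\bigl((N'-N)\star(b-1)\bigr)\star\bigl((N'-N)\star 1\bigr)=\iota(x)\star\iota(1).\]
Since $\iota$ is a homomorphism, this equals $\iota(x\star 1)=\iota(x+1)$. Hence the row reads $\iota(1),\iota(2),\dots,\iota(N)=N'$, so its period is exactly $N$. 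This is the step I expect to be the real content of the proof; everything else is routine. If the direct use of \eqref{eq:distributivity0weak} runs into trouble at $b-1=N'$, it does not matter, because we only need $b\leq N<N'$.

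\emph{Choose $N''$.} Pick a residue $r\not\equiv N' \mod N$ with $r\not\equiv 0 \mod N$; for instance $r=N'+1$, or $N'+2$ if $N'+1\equiv 0$, which is possible since $N\geq 3$. By the Chinese remainder theorem, using $\gcd(L,N)=1$, there is a class modulo $LN$ determined by $x\equiv N \mod L$ and $x\equiv r \mod N$. A representative of this class is coprime to $L$, because it is $\equiv N$ modulo every prime dividing $L$ and those primes are smaller than $N$. It is also coprime to $N$, since $r\not\equiv 0$. So Dirichlet's theorem gives a prime $N''>N'$ in this class.

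Then (ii) holds by \thref{thm:emb}. For (iii): if there were a corner embedding of $S_{N'}$ into $S_{N''}$, then \thref{thm:emb} applied with $a=N'-N$ would force $N''\equiv N' \mod \tau^{N'}(N'-N)$, that is, $N''\equiv N' \mod N$. This contradicts $N''\equiv r\not\equiv N' \mod N$.
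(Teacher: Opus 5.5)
Your proof is correct for $N\ge 3$ and is essentially the paper's argument. It uses the same computation $\tau^{N'}(N'-N)=N$ through the corner embedding $\iota$, followed by Dirichlet and the Chinese remainder theorem to put $N''$ in a nonzero residue class modulo $N$ different from that of $N'$; your explicit coprimality check and choice $N''>N'$ are a bit tidier than the paper's appeal to \thref{lem:emb_primes} with two preassigned residues $k\neq k'$.

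The case $N=2$ that you set aside is not handled by the paper's proof either, and the row $N'-N$ cannot handle it because all odd primes agree modulo $2$. The lemma still holds there: $S_2$ corner embeds into every $S_m$, while $S_5$, whose rows have periods $3,3,2,1$, does not corner embed into $S_7$.
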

\begin{proof}
	Let \(k\) and \(k'\) be two different numbers \(<N\). Then by \thref{lem:emb_primes} there are prime numbers \(N'\) and \(N''\) such that \(S_N\) corner embeds into both \(S_{N'}\) and \(S_{N''}\), with \(N' = k \mod N\) and \(N'' = k' \mod N\). We now show that \(S_{N'}\) does not embed into \(S_{N''}\).
	
	Notice that \(N'-N''=k+n N - k' - n'N=(k-k')+(n-n')N\), for some integers \(n\) and \(n'\). Since \(k'-k'' \neq 0\), we have that \(N' \not\equiv N'' \mod N\). We now show that \(N\) is the period of the \((N'-N)\)-th row of \(S_{N'}\), so by \thref{thm:emb} there is no embedding of \(S_{N'}\) into \(S_{N''}\).
	
	We show by induction that for \(a\leq N\), the \(a\)-th element of the \((N'-N)\)-th row is equal to \(N'-N+a\), whence it follows that \(\tau^{N'}(N'-N)=N\). The first element of the \((N'-N)\)-th row is given by \(N'-N+1\). Let now assume that the statement is true for \(a < N\) and let us show it for \(a+1\). We have:
	\[
	(N'-N)\star(a+1)=((N'-N)\star a) \star ((N'-N)\star 1)= (N'-N+a)\star (N'-N+1),
	\]	
	and by the existence of the corner embedding \(\iota\) of \(S_N\) into \(S_{N'}\) we obtain
	\[(N'-N+a)\star (N'-N+1)=\iota(a) \star \iota(1)=\iota(a\star 1)=\iota(a+1)=N'-N+(a+1),\]
	as desired.
\end{proof}

We are now ready to establish the main result of this section.
	
\begin{thm}\label{thm:inv_lim}
	There exist $2^{\aleph_0}$ pairwise non-isomorphic direct limits of finite Laver algebras under corner embeddability.
\end{thm}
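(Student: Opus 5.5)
We build a binary tree of primes using \thref{lem2:emb_primes}, and then show that distinct branches give non-isomorphic limits.

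\emph{Construction.} Start from a prime $N_\emptyset$, say $N_\emptyset=3$. Suppose a prime $N_s$ has been assigned to a finite binary string $s$. Apply \thref{lem2:emb_primes} to $N_s$ and obtain primes $N_{s\upsmallfrown 0}<N_{s\upsmallfrown 1}$, both larger than $N_s$, such that $S_{N_s}$ corner embeds into each of $S_{N_{s\upsmallfrown 0}}$ and $S_{N_{s\upsmallfrown 1}}$, but $S_{N_{s\upsmallfrown 0}}$ does not corner embed into $S_{N_{s\upsmallfrown 1}}$. The proof of that lemma also gives $N_{s\upsmallfrown 0}\not\equiv N_{s\upsmallfrown 1}\pmod{N_s}$. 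For each $f\in 2^{\mathbb N}$, put $x_f(k)=N_{f\restriction k}$. By \thref{thm:emb} this sequence is Laver-suitable, so the direct limit $S_{x_f}$ is a countably infinite algebra. This gives $2^{\aleph_0}$ direct limits, and it remains to separate them up to isomorphism.

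\emph{Invariant.} Represent $S_x$ as the set of pairs (level $k$, element of $S_{x(k)}$) modulo the corner embeddings. Equivalently, identify $a\in S_{x(k)}$ with its ``distance from the top'' $x(k)-a\geq 0$. The top element is then the unique idempotent, which is the image of each $N$, and each row is periodic with $\tau(a)$ preserved by the embeddings. Since every $S_N$ has a unique idempotent, an isomorphism $S_{x_f}\cong S_{x_g}$ must preserve the top element and the period of each element. I also expect it to preserve the ``depth'' $x(k)-a$ and the successor structure. One way to see this: the element $u$ with $u\star u=$ top, $u\star$top$=$top, and period $1$ is the element $N-1$. More generally, define $d(a)$ by descending recursion, using the fact that $a\star 1$ is the element of depth $d(a)-1$. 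The operation $a\mapsto a\star 1$ is definable, so the depth is recoverable as the number of steps of $\star 1$ needed to reach the top. Hence any isomorphism sends the depth-$d$ element to the depth-$d$ element, i.e.\ it is the identity on depths.

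\emph{Separating branches.} Let $f\neq g$ first differ at position $k$, with common prefix $s=f\restriction k$. The proof of \thref{lem2:emb_primes} shows that in $S_{N_{s\upsmallfrown i}}$ the element of depth $N_s$ has period exactly $N_s$. The corner embeddings preserve both depth and period, so in $S_{x_f}$ the element of depth $N_s$ has period $N_s$, and likewise in $S_{x_g}$. This alone does not distinguish the two limits. The distinguishing feature should come one level higher, at the element of depth $N_{s\upsmallfrown 0}$. In $S_{x_f}$ (say $f(k)=0$) this element lies in the image of $S_{N_{s\upsmallfrown 0}}$, and its behaviour there is that of the bottom element $1$ of $S_{N_{s\upsmallfrown 0}}$. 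In $S_{x_g}$ it lies in $S_{N_{s\upsmallfrown 1}}$ at position $N_{s\upsmallfrown 1}-N_{s\upsmallfrown 0}$. The plan is to compare the rows of this element: in $S_{x_f}$ its row starts at depth $N_{s\upsmallfrown 0}-1$ and has a period $p$ dividing compatible moduli. Its $\star$-values, read through depths, must coincide in both limits under an isomorphism. Then, arguing as in the ``only if'' direction of \thref{thm:emb}, this forces $N_{s\upsmallfrown 0}\equiv N_{s\upsmallfrown 1}\pmod{\tau}$ for every relevant period. In particular it forces the existence of a corner embedding $S_{N_{s\upsmallfrown 0}}\to S_{N_{s\upsmallfrown 1}}$ (or into a later stage), which contradicts clause (iii). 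Making this final step rigorous is the main obstacle. Clause (iii) of \thref{lem2:emb_primes} only rules out embedding into $S_{N_{s\upsmallfrown 1}}$ itself, so I would likely strengthen the construction to use congruences modulo $N_s$ that persist along each branch. Concretely, I would choose all later primes on the branch through $s\upsmallfrown i$ to be congruent to $N_{s\upsmallfrown i}$ modulo $N_s$; \thref{lem:emb_primes} permits this, since we may prescribe residues. Then in $S_{x_f}$ the rows at depth $\ge N_s$ encode the residue of the eventual sizes modulo $N_s$, via the period-$N_s$ row of the depth-$N_s$ element seen from higher elements. This residue is an isomorphism invariant and it differs between $f$ and $g$.
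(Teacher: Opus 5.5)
\textbf{There is a genuine gap: you never actually separate $S_{x_f}$ from $S_{x_g}$.} Your construction is the paper's. Your rigidity observation is correct: any isomorphism of two such limits fixes the top and every depth. It is in fact exactly what the paper's one-line conclusion leaves implicit.

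You leave the last step as ``the main obstacle,'' and the repair is only sketched. The invariant (``the rows at depth $\ge N_s$ encode the residue \dots{} seen from higher elements'') is never made precise. Also, \thref{lem:emb_primes} prescribes the new prime's residue modulo the \emph{current} prime, not modulo an earlier $N_s$, so it does not license your strengthened construction as stated.

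\textbf{The missing idea is short, and clause (iii) already suffices.} Write $N=N_s$, $N'=N_{s\upsmallfrown 0}$, $N''=N_{s\upsmallfrown 1}$, so $N'<N''$.
\begin{enumerate-(1)}
\item Once depths are fixed, $S_{x_f}\cong S_{x_g}$ means the two operations on the set of depths $\{0,1,2,\dots\}$ are literally equal.
\item In $S_{x_f}$, the restriction to depths $<N'$ is a copy of $S_{N'}$.
\item In $S_{x_g}$, those same depths lie inside the stage $S_{N''}$, because $N'<N''$, and there the operation is that of $S_{N''}$.
\item Hence the corner map $a\mapsto N''-N'+a$ is a homomorphism $S_{N'}\to S_{N''}$, contradicting (iii) outright.
\end{enumerate-(1)}
Your worry that one only gets an embedding ``into a later stage'' $S_M$ also dissolves. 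Let $\iota_2\colon S_{N''}\to S_M$ be the corner embedding and $\iota_1\colon S_{N'}\to S_{N''}$ the corner map. If $\iota_2\circ\iota_1$ is a homomorphism, injectivity of $\iota_2$ forces $\iota_1$ to be one. So no strengthening of the construction is needed.

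\textbf{Your residue idea also works without modifying the construction.} At stage $N'$ the depth-$N$ element has period $N$, and corner embeddings preserve periods. So Laver-suitability forces every later stage on that branch to be $\equiv N'\pmod N$. Then (depth $N$)$\star$(top) is the element of depth $N-r$, where $r\in\{1,\dots,N-1\}$ is $N'\bmod N$. The two branches give different $r$, so this is a concrete invariant.

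\textbf{Two smaller repairs.}
\begin{enumerate-(a)}
\item The map $a\mapsto a\star 1$ is not available in the limit. There is no element $1$ there, since the bottom elements of the stages escape to infinite depth. Justify rigidity instead as follows. For $d\neq 0$, every element of the row of $d$ has depth $<d$, and the row contains depth $d-1$. So the transitive closure of ``$e$ lies in the row of some $d\neq$ top'' is the usual order on depths, and $(\mathbb{N},<)$ is rigid.
\item The element of depth $N'$ is not in the image of $S_{N'}$, whose depths are $0,\dots,N'-1$.
\end{enumerate-(a)}
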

\begin{proof}
We form a perfect set of direct limits of finite Laver algebras, i.e., an injection of the Cantor set into the set of isomorphism classes of direct limits of finite Laver algebras under corner embeddability. Define a function \(f\) from \(2^{<\mathbb{N}}\) to the set of prime numbers by induction. If \(f(s) = N\), with \(N\) a prime number, then we define \(f(s\upsmallfrown0)\) and \(f(s\upsmallfrown1)\) to be, respectively, the prime numbers \(N'\) and \(N''\) given by \thref{lem2:emb_primes}. Then for each element \(x\) of the Cantor space we have a different Laver algebra obtained as the direct limit of \(S_N\) for \(N\) of the form \(f(x \restriction k)\). If $x$ and $y$ are two distinct elements of the Cantor set $2^{\mathbb{N}}$, then, letting $k$ be least such that $x(k+1) \neq y(k+1)$, say $x(k) = y(k) = N, x(k+1) = N', y(k+1) = N''$, \thref{lem2:emb_primes} itself yields the non-existence of corner embeddings between $S_{N'}$ and $S_{N''}$. This implies that the direct limits $S_x$ and $S_y$ cannot be isomorphic.
\end{proof}

\section{Periods of Laver algebras}\label{sec:periods}
We would like to conclude with some comments on the periods of Laver algebras.

It is commonly known that
\[\lim_{n\to\infty} \tau^{2^n}(1) = \infty,\]
i.e., that the periods of the first rows of Laver tables grow unboundedly. However, all known proofs of this fact require the use of the extremely strong large cardinal axiom $I3$ asserting the existence of a non-trivial elementary embedding 
\[j:V_\lambda\to V_\lambda\]
from a rank-initial segment of the set-theoretic universe to itself (see \cite{La92,La95,DJ97}). By work of Dougherty and Jech \cite{DJ97}, this result is not provable in Primitive Recursive Arithmetic, and this is due to the fact that the function 
\[k \mapsto \text{ least $n$ such that $\tau^{2^n}(a) \geq k$}\]
is extremely fast growing. Indeed, Dougherty \cite{Do93} showed that this function dominates the Ackermann function and Qi \cite{Qi26} has recently showed that it dominates all the provably total recursive functions of Peano Arithmetic.

We would like to finish with some numerical observations on the values of $\tau^n(1)$ for various $n$. Figure \ref{period1} below plots function $n\mapsto \tau^n(1)$, for $n\leq 125,000$. Figure \ref{period2} plots the frequency with which each period appears. We make a few brief observations:

\begin{enumerate}
\item Unlike the situation for powers of $2$, the function $n\mapsto \tau^n(1)$ is not strictly increasing.
\item The function $n\mapsto \tau^n(1)$ appears to grow much faster for arbitrary $n$ than for powers of $2$. Indeed, let $2^n$ be least such that $\tau^{2^n}(1) > 16$. 
Qi \cite{Qi26} has given an extremely large lower bound for $n$: letting $m_0(a) = a$ and $m_{\alpha+1}(a) = m_\alpha^a(a)$ and, for $\alpha = \omega (=\mathbb{N})$ letting $m_\omega(a) = m_a(a)$, he has shown that
\[2^n < m_{\omega+2}(2).\]
On the other hand, $\tau^{280}(1) = 28$, $\tau^{3072}(1) = 32$, and $\tau^{59944}(1) = 36$ already.
\end{enumerate}

 \begin{figure}[ht]
 	\centering \includegraphics[width=0.9\textwidth]{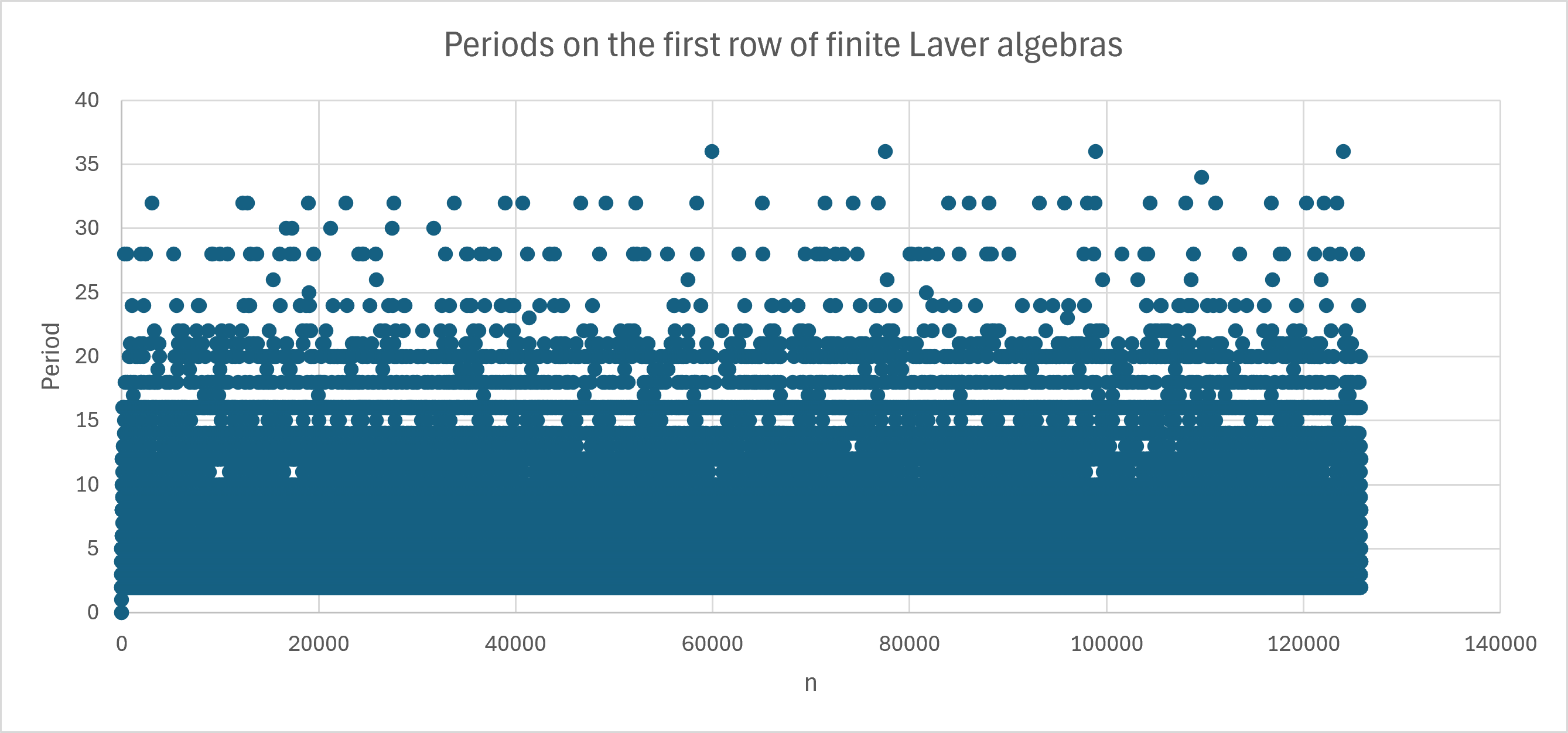}
 	\caption{Scatter plot of values of $\tau^n(1)$ for $n$ below $\approx 125,000$.}
 	\label{period1}
 \end{figure}

 \begin{figure}[ht]
	\centering \includegraphics[width=0.9\textwidth]{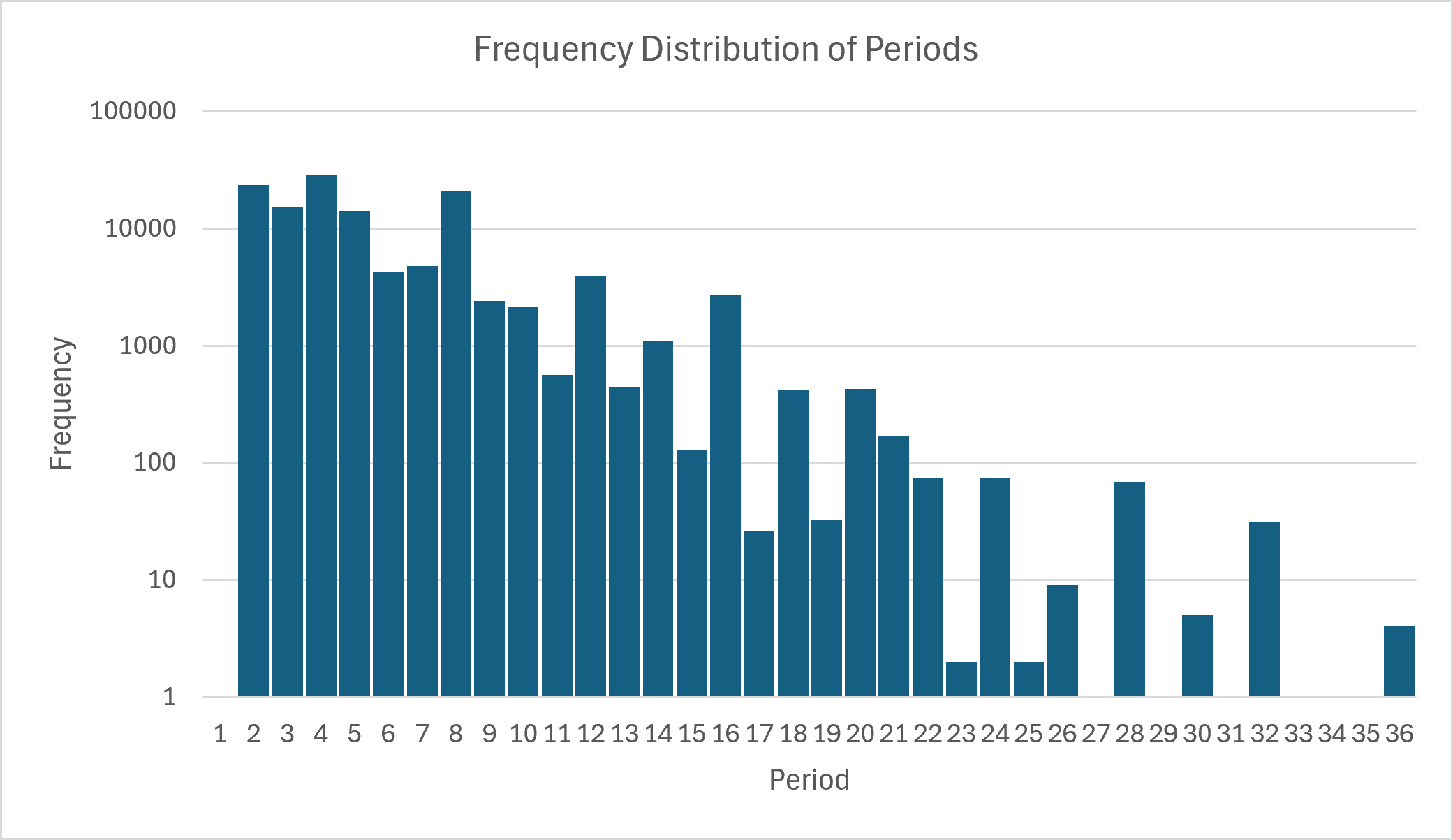}
	\caption{Frequency plot of values of $\tau^n(1)$ for $n$ below $\approx 125,000$.}
	\label{period2}
\end{figure}

We make the following conjectures:
\begin{conjecture}
It is provable in Peano Arithmetic that $\lim_{n\to\infty}(\tau^n(1)) = \infty$.
\end{conjecture}

\begin{conjecture}
For every $a \geq 2$, there is $n$ such that $\tau^n(1) = a$. Indeed, there are infinitely many such $n$.
\end{conjecture}

\bibliographystyle{alpha}
\bibliography{bibliography}
	
\end{document}